\title[Thue inequalities with few coefficients]{Thue inequalities with few coefficients}
 \author{Paloma Bengoechea}
\address{ ETH, Mathematics Dept.\\CH-8092, Z\"urich, Switzerland}
\email{paloma.bengoechea@math.ethz.ch}
\subjclass[2000]{11D45}
\keywords{Binary Forms, Thue Equations, Thue's inequalities, Fewnomials, sparse forms}
\def\x{\langle \bm x\rangle}
\def\xx{|\bm x|}
\def\GL{\mathrm{GL}(2,\mathbb{Z})}
\def\SL{\mathrm{SL}(2,\mathbb{Z})}
\def\Z{\mathbb{Z}}
\begin{document}

\newtheorem{thm}{Theorem}[section]
\newtheorem{prop}[thm]{Proposition}
\newtheorem{lemma}[thm]{Lemma}
\newtheorem{cor}[thm]{Corollary}
\newtheorem{conj}[thm]{Conjecture}
\newtheorem{rk}[thm]{Remark}
\newtheorem{defi}[thm]{Definition}

\begin{abstract}
Let $F(x, y)$ be a binary form  with integer coefficients, degree $n\geq 3$, and irreducible over the rationals.
Suppose that only $s + 1$ of the
$n + 1$ coefficients of $F$ are nonzero. We show that the Thue inequality $|F(x,y)|\leq m$ has 
$\ll s m^{2/n}$ solutions provided that  the absolute value of the discriminant $D(F)$ of $F$ is large enough. 
We also give a new upper bound for the number of solutions of  $|F(x,y)|\leq m$, with no restriction on the discriminant of $F$
that depends mainly on $s$ and $m$, and slightly on $n$. 
Our bound becomes independent of $m$ when  $m<|D(F)|^{2/(5(n-1))}$, and also independent of $n$  
if $|D(F)|$ is large enough. 
\end{abstract}
\maketitle

\section{Introduction}


Let $F(x , y)$ be a binary form  with integer  coefficients and degree $n\geq 3$, irreducible over the rationals.  
Let $m$ be a positive integer.  Thue studied in \cite{Thu} the inequalities 
\begin{equation}\label{1}
1\leq |F(x , y)| \leq m,
\end{equation}
known as \emph{Thue inequalities}, showing that they have  finitely many solutions in integers $x$ and $y$. 
Mahler \cite{Mah9} showed that Thue inequalities have at most $c(F)m^{2/n}$ solutions, where $c(F)$
depends only on $F$. In this bound the dependence on $m$ is best possible if $m$ is large. 
For Thue equations $|F(x,y)|=m$, the dependence on $F$ of $c(F)$ has been progressively replaced by a dependence on the degree $n$, first by Siegel
in some special cases, and in general by Evertse \cite{Evt} 
  in his thesis. Later Bombieri and Schmidt \cite{Bom} obtained the bound $\ll n^{1+\nu}$ 
for the number of primitive solutions (solutions $(x,y)$ with $x$ and $y$ coprime), where $\nu$ is the number of prime factors of $m$.

In his fundamental work on diophantine equations $f(x, y)=0$, Siegel \cite{Sie} conjectured that,
 when the curve defined by the equation is irreducible and of positive genus,
 the number of solutions  sould be bounded only in terms of the number of nonzero coefficients. In this form, the conjecture is not true; 
 there is no bound independent of $m$ for cubic Thue equations, as the work of 
 Chowla \cite{Ch}, 
 Mahler \cite{Mah35} and Silverman \cite{Sil}  show.
 However, there have been several subsequent works
 with the goal of  replacing the dependence on the degree by the number of nonzero coefficients. Schmidt was the first in studying this modified version of
 Siegel's conjecture for Thue equations in general, and it turned out to be equally difficult to study  Thue inequalities (see his introduction
  in \cite{S87}).
  
 Suppose  that $F(x , y)$  has not more than $s+1$ nonzero coefficients, so that
\begin{equation}\label{F}
F(x,y)=\sum_{i=0}^s a_i x^{n_i}y^{n-n_i}
\end{equation}
with $0=n_0<n_1<\ldots n_{s-1}<n_s=n$. Then Schmidt \cite{S87} proved that the inequality \eqref{1} has
\begin{equation}\label{Sbound}
\ll (ns)^{1/2}m^{2/n}(1+\log m^{1/n})
\end{equation}
solutions. Here and throughout the paper,  the constants implicit
in $\ll$ will be absolute and effectively computable. Thunder \cite{Thu} could remove the logarithmic factor for many values of $m$.
Later, Mueller and Schmidt \cite{MS88} obtained the second bound
\begin{equation}\label{MSbound}
\ll s^2m^{2/n}(1+\log m^{1/n}),
\end{equation}
hence  the number of solutions of \eqref{1} is bounded  in terms of $s$ and $m$ only. This was proved previously for $s=1$ 
(i.e. for binomial forms) in \cite{Mu} and for $s=2$ (i.e. for trinomials) in \cite{MS87}.
Mueller and Schmidt could remove the logarithmic factor in their general bound \eqref{MSbound} if $n\geq \max(4s,s\log^3s)$. 
When $n\gg s$, $F$ is usually called \emph{sparse form} or \emph{fewnomial}. 
They also conjectured that the logarithmic factor should be removed for all forms of degree $n\geq 3$ and, more importantly, that the term $s^2$ 
should  be $s$.  

Here we establish two new upper bounds for the number of solutions of \eqref{1}. The bound given in Theorem \ref{th2} proves Mueller-Schmidt's conjecture 
for almost all binary forms with given degree. 
\begin{thm}\label{th2}
  Let $F(x,y)\in\mathbb{Z}[x,y]$ be an irreducible  binary form with  $s+1$ nonzero coefficients and degree $n\geq 3$.
Assume that the absolute value of the discriminant
of $F$ is greater than $(n(n-1))^{8n(n-1)}$.
For each positive integer $m$, the inequality $|F(x , y)| \leq m$ has
$$\ll  s  m^{2/n}$$
 solutions. 
\end{thm}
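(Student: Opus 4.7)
The plan is to prove the bound $\ll s m^{2/n}$ by classifying the solutions of $|F(x,y)|\leq m$ by height and handling each class using a combination of the sparsity of $F$ (only $s+1$ nonzero coefficients) and the hypothesis that $|D(F)|$ is very large. I would first pass to a $\GL$-equivalent reduced form: since the substitution $(x,y)\mapsto A(x,y)$ with $A\in\GL$ preserves $|F(x,y)|$ and the number of integer solutions, this is a free reduction allowing control of the roots $\alpha_1,\dots,\alpha_n$ of $F(X,1)$ in terms of the leading coefficient. Then I would introduce a height threshold $Y_0$ depending on a suitable power of $|D(F)|$ and split the solution set into \emph{small} solutions with $\max(|x|,|y|)\leq Y_0$ and \emph{large} solutions with $\max(|x|,|y|)>Y_0$.

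For the large solutions I would associate to each $(x,y)$ the root $\alpha_i$ of $F(X,1)$ closest to $x/y$. Since a real pair $(x,y)$ can approximate a nonreal $\alpha_i=a+bi$ only to accuracy $\geq |by|$, the large solutions essentially approach only real roots of $F(X,1)$. By Descartes's rule of signs applied to the sparse form $F$, there are at most $2s$ real roots, hence $O(s)$ classes of large solutions. For each real root, Thue's gap principle limits the number of excellent rational approximants; crucially, the hypothesis $|D(F)|>(n(n-1))^{8n(n-1)}$ guarantees the roots are separated by a very large margin, which strengthens the gap $y_{k+1}\geq c\,y_k$ to the extent that each real root contributes only $O(1)$ large solutions. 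This yields $O(s)$ large solutions in total.

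For the small solutions, I would count lattice points in $\{(x,y)\in\mathbb{R}^2 : |F(x,y)|\leq m,\ \max(|x|,|y|)\leq Y_0\}$. Mahler's classical area estimate gives total area $\ll m^{2/n}$, and Khovanskii-type fewnomial bounds on the topology of sparse real algebraic sets ensure the region has $O(s)$ connected components inside the box. Combining with standard lattice-point-versus-area estimates yields $\ll s m^{2/n}$ small solutions, which absorbs the large-solution count and completes the theorem.

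The main obstacle is the quantitative sharpening of the gap principle under the large-discriminant hypothesis. In the Mueller--Schmidt setup one gets $O(1+\log m^{1/n})$ approximants per root, which together with $O(s)$ root groups produces the $s^{2}(1+\log m^{1/n})$ factor in their bound; to drop both the logarithm and one factor of $s$ one must show that the roots of $F$ are so well separated (via a Mahler-type lower bound $\min_{i\neq j}|\alpha_i-\alpha_j|$ coming from $|D(F)|>(n(n-1))^{8n(n-1)}$) that only boundedly many dyadic height ranges can carry an approximant to a given real root. A secondary subtlety is avoiding double-counting of solutions that simultaneously approximate two nearby real roots, which would require a careful partition of the large-solution range respecting the fewnomial structure.
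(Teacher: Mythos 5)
Your proposal shares the surface structure of the paper's argument (split into small and large solutions after reducing to a convenient $\GL$-representative) but nearly all the concrete mechanisms you invoke are either incorrect or substantially different, and several contain genuine gaps.

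First, the claim that $|D(F)|>(n(n-1))^{8n(n-1)}$ forces the roots of $F(X,1)$ to be well-separated is false. Since $D(F)=a_n^{2(n-1)}\prod_{i<j}(\gamma_i-\gamma_j)^2$, a huge discriminant can come entirely from a huge leading coefficient; the roots can still be arbitrarily close. The paper never uses root separation. Instead, the large-discriminant hypothesis is used through Mahler's inequality \eqref{mahD5} to guarantee that the minimal Mahler measure $M$ is large, so that the working range $m\le M/e^{200n}$ is nonempty, and later through the Bombieri--Schmidt substitution by a prime $p$ satisfying \eqref{p}, which trades the $n$ in the small-solution count for the factor $m^{2/n}|D(F)|^{-1/(n(n-1))}$. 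Second, the claim that a real rational $x/y$ can approach a nonreal root $\alpha_i=a+bi$ only to accuracy $\gtrsim |by|$ is wrong (the correct barrier is $|b|$), and in any case this does not reduce matters to real roots: complex roots with very small imaginary part can be approximated extremely well. This is precisely what Lemma~\ref{lmMS88} handles --- it produces a set of at most $12s-3$ roots (not necessarily real) controlling all approximations up to the factor $R=n^{800\log^2 n}$. Your Descartes-based count of $2s$ real roots is not an adequate substitute for this lemma, and no variant of it gives $O(1)$ approximants per root; the paper obtains $\ll s(1+\log\log m^{1/n})$ large solutions, and the log-log factor is disposed of only at the very end via a case split between the Bombieri--Schmidt bound and Thunder's Proposition~\ref{Thun}, not by a sharpened gap principle.

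Third, for small solutions, counting lattice points in $\{|F|\le m,\ \max(|x|,|y|)\le Y_0\}$ via Mahler's area estimate plus Khovanskii-type topology bounds is a different and problematic route. Khovanskii-type bounds for fewnomials grow like $2^{O(s^2)}n^{O(s)}$, not linearly in $s$, and the relation between connected components of the sublevel set and lattice points of the inequality is not quantitatively controlled in the way you need. The paper instead counts small solutions through the Bombieri--Schmidt decomposition into linear forms $L_i$, the chain of gap inequalities in Lemmas~\ref{Sl5}--\ref{Sl6}, and the product bound \eqref{takinglog}; the sparsity enters only through Lemma~\ref{lmMS88} and the $\GL$-invariant class $C(4s-2)$, not through Descartes or through any topological fewnomial bound. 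Finally, your closing remark that the small-solution count absorbs the large-solution count is backwards relative to the actual argument: it is the small-solution count $\ll n$ that needs to be converted (via the prime $p$ substitution and the discriminant hypothesis) into something $\ll s$; the large-solution count $\ll s\log\log m^{1/n}$ is the term that motivates the final case distinction.
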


Since  
  there are only finitely many $\SL$-equivalence classes of irreducible binary forms of fixed degree and bounded discriminant (see \cite{BiMer}),
 our result, while stated for a quite strong condition on the discriminant, 
holds for almost all classes of forms  of  given degree.  (Note that equivalent forms give the same number of solutions to the inequality \eqref{1}.)

Under a similar condition on the discriminant, an upper bound for the number of solutions to \eqref{1} for 
small values of $m$ and almost all forms is given in \cite{AB}, following previous works by \cite{EG16} and \cite{Gyo1}. 
That bound is linear in $s$ when the forms are `very' sparse, namely when $n\geq s^2$.

In Theorem \ref{th1} we give a new bound that holds for all sparse forms and all integers $m$. It becomes
independent of $m$ for small values of $m$ as Corollary \ref{cor} shows, and also independent of $n$ 
when the absolute value of the discriminant is large enough.

\begin{thm}\label{th1}
  Let $F(x,y)\in\mathbb{Z}[x,y]$ be an irreducible  binary form with  $s+1$ nonzero coefficients and degree $n\geq 3s$. Let $D(F)$ and $H(F)$ be the 
  discriminant and the height of $F$ respectively.
For each positive integer $m$  
the inequality 
\begin{equation}\label{Tin2}
|F(x , y)| \leq m
\end{equation} 
 has 
 $$\ll (c(s)(1+\log m^{\frac{1}{n}})+\log^3 n) m^{\frac{2}{n}}|D(F)|^{-\frac{1}{n(n-1)}}
 $$
solutions, with
\begin{equation}\label{c}
c(s)=\left\{\begin{array}{ll}
s &\quad\mbox{if $n\geq s^4$}\\ 
s\log s &\quad\mbox{if $9s^2 \leq n<s^4$}\\
 s\log s(1+\frac{s}{\log H(F)}) &\quad\mbox{if $n<9s^2$}.
\end{array}\right.
\end{equation}
\end{thm}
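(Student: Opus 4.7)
The plan is to follow the framework of Mueller--Schmidt for counting solutions to Thue inequalities with sparse forms, and to sharpen their gap principles using the discriminant of $F$ in the spirit of the recent work of Evertse--Gy\H{o}ry and the author's prior paper with Akhtari \cite{AB}. Factor $F(x,y)=a_n\prod_{i=1}^n(x-\alpha_i y)$. A primitive solution $(x,y)$ of $|F(x,y)|\leq m$ with $|y|$ not too small must satisfy $|x-\alpha_i y|$ small for some root $\alpha_i$; more precisely, $|\alpha_i - x/y|$ is of order $m/(|a_n|\,|y|^n\prod_{j\neq i}|\alpha_j-\alpha_i|)$. Grouping solutions by the root they approximate and by dyadic ranges of $|y|$ is the natural starting point, and at each stage the number of ``bins'' that must be considered will be controlled by $s$ (via the sparse structure of $F$) rather than by $n$.

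First, I would dispose of \emph{small} solutions, those with $\max(|x|,|y|)$ bounded by an explicit function of $m$, $s$, $n$ and $H(F)$. These are counted by direct enumeration combined with the fact that the number of real roots of a sparse polynomial with $s+1$ terms is $O(s)$ (Descartes' rule of signs), which together with a Mahler-type argument produces a contribution of the shape $c(s)\,m^{2/n}|D(F)|^{-1/(n(n-1))}$. Second, for \emph{medium-size} solutions, I would invoke the sparse-form machinery of Mueller--Schmidt directly: the roots of $F$ cluster according to the Newton polygon of $F(x,1)$, and the geometry of the clusters forces approximations $x/y$ to appear in at most $O(c(s))$ groups. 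The three piecewise values of $c(s)$ correspond to successively weaker estimates on this cluster count: for $n\geq s^4$ one recovers $s$ from an inductive sparse argument; for $9s^2\leq n<s^4$ an extra $\log s$ emerges from a pigeonhole subdivision; and for $n<9s^2$ the remaining factor $1+s/\log H(F)$ absorbs coefficient-size losses when the sparse geometry is no longer strong enough on its own.

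Third, I would apply a discriminant-enhanced gap principle to \emph{large} solutions, those yielding very good approximations to a single $\alpha_i$. Using $|D(F)|=|a_n|^{2n-2}\prod_{i<j}(\alpha_i-\alpha_j)^2$, a lower bound on pairwise root separations translates into a lower bound on the spacing between consecutive good approximations, improving the standard gap estimate by a factor $|D(F)|^{1/(n(n-1))}$. Combining this with a dyadic sweep over $|y|$ up to the Mahler threshold yields the $m^{2/n}|D(F)|^{-1/(n(n-1))}$ factor; the additive $\log^3 n$ term arises from the number of dyadic scales and Newton-polygon slope regimes one must traverse to handle simultaneously the real and complex roots of $F$, together with the iterated Thue--Siegel-type argument used to pass from one range of $|y|$ to the next.

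The main obstacle will be obtaining uniform control of the cluster structure for sparse forms when $n$ is only moderately larger than $s$, namely in the regime $n<9s^2$: the pure $O(s)$ cluster bound no longer suffices, and one must interpolate between Newton-polygon estimates and height-based bounds on the $\alpha_i$, which is the source of the extra $s/\log H(F)$ factor. Balancing this interpolation against the discriminant gap principle, so that the final bound remains linear in $m^{2/n}|D(F)|^{-1/(n(n-1))}$ across all three regimes of $c(s)$ and additive in the $\log^3 n$ term, is the delicate point of the argument; elsewhere the estimates are direct adaptations of Mueller--Schmidt's reasoning combined with the discriminant input from \cite{AB}.
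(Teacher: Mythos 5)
Your three-way split into small, medium and large solutions, with Mueller--Schmidt's machinery for medium solutions and a citation of their bound for large solutions, agrees in broad outline with the paper. But the proposal has a genuine gap exactly where the crucial factor $m^{2/n}|D(F)|^{-1/(n(n-1))}$ and the additive $\log^3 n$ term come from, and it misattributes where the discriminant enters.

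You assert that small solutions can be counted ``by direct enumeration combined with Descartes' rule'' to yield the contribution $c(s)\,m^{2/n}|D(F)|^{-1/(n(n-1))}$, and that for large solutions one needs a ``discriminant-enhanced gap principle'' that improves the standard estimate by a factor $|D(F)|^{1/(n(n-1))}$. Neither of these is what actually produces the discriminant factor. In the paper, large solutions are simply $\ll s$ by Mueller--Schmidt's Proposition 1 with no discriminant input at all, and the small-solution proposition (Proposition \ref{ppsmall}) gives only $\ll s+\log^3 n+\log m^{1/n}$ under the strong smallness hypothesis $m\leq M/100^n$. The factor $m^{2/n}|D(F)|^{-1/(n(n-1))}$ arises from an entirely different mechanism which you do not mention: the Bombieri--Schmidt prime-splitting trick. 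One chooses the smallest prime $p>10^6 m^{2/n}|D(F)|^{-1/(n(n-1))}$, decomposes $\mathbb{Z}^2=\bigcup_{j=0}^{p}A_j\mathbb{Z}^2$ with the matrices $A_0=\bigl(\begin{smallmatrix}1&0\\0&p\end{smallmatrix}\bigr)$, $A_j=\bigl(\begin{smallmatrix}p&j\\0&1\end{smallmatrix}\bigr)$, observes that the discriminant of each $F_{A_j}$ is scaled by $p^{n(n-1)}$ so that $m$ satisfies the required smallness condition relative to the transformed form, and applies Proposition \ref{ppsmall} to each of the roughly $p$ forms. The factor of $p\asymp m^{2/n}|D(F)|^{-1/(n(n-1))}$ is simply the number of pieces in this decomposition. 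Without this step there is no route to the discriminant-dependent bound.

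There is a further subtlety you would run into if you tried to carry out your plan: the $GL_2(\mathbb{Z})$-substitutions needed for the prime-splitting argument do \emph{not} preserve the number of nonzero coefficients of $F$, so one cannot apply a small-solution bound stated for forms with $s+1$ nonzero coefficients to the transformed forms $F_{A_j}$. The paper avoids this by phrasing Proposition \ref{ppsmall} in terms of Schmidt's class $C(4s-2)$ (few real zeros of $uF_x+vF_y$), which \emph{is} invariant under $GL_2(\mathbb{Z})$, using Mueller--Schmidt's Lemma 7 (Lemma \ref{lmMS88} here) in the strengthened form where only membership in $C(4s-2)$ is assumed. Your Descartes'-rule argument applies directly only to the original sparse $F$, not to its transforms. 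Finally, the $\log^3 n$ term does not come from ``dyadic scales and Newton-polygon slope regimes'' as you suggest; it comes from $\log R$, where $R=n^{800\log^2 n}$ is the explicit constant in Mueller--Schmidt's root-clustering lemma, and enters the small-solution bound through the product estimate in \eqref{takinglog}.
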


\begin{cor}\label{cor}
Let $F(x,y)\in\mathbb{Z}[x,y]$ be an irreducible  binary form with  $s+1$ nonzero coefficients and degree $n\geq 3s$. Let $D(F)$ 
be the discriminant of $F$ and $m$ be a positive integer such that
 \begin{equation}\label{ccm}
 m\leq|D(F)|^{\frac{1}{(2+\frac{1}{2})(n-1)}}.
 \end{equation}
 Then $|F(x,y)|\leq m$ has 
$$  \ll c(s)+ \log^3 n 
$$
 solutions, with $c(s)$  defined by \eqref{c}.
 
Moreover, if $|D(F)| \geq (\log n)^{15n(n-1)}$,
 then $|F(x,y)|\leq m$ has 
 $$\ll c(s)$$ 
 solutions.
\end{cor}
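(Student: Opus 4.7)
The approach is a direct substitution of the hypothesis \eqref{ccm} into the bound of Theorem \ref{th1}; there is essentially no deeper idea beyond verifying that the constraint on $m$ was chosen precisely to cancel the $m$-dependence in the Theorem \ref{th1} estimate. Writing the Theorem \ref{th1} bound as
$$\bigl(c(s)(1+\log m^{1/n}) + \log^3 n\bigr)\cdot A, \qquad A := m^{2/n}|D(F)|^{-1/(n(n-1))},$$
the problem reduces to estimating $A$ and $A\log m^{1/n}$.

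First I would rewrite the exponent in \eqref{ccm} as $1/((2+\tfrac12)(n-1)) = 2/(5(n-1))$, which gives $m^{2/n}\leq |D(F)|^{4/(5n(n-1))}$ and hence
$$A \leq |D(F)|^{-1/(5n(n-1))}.$$
Setting $t = (\log|D(F)|)/(5n(n-1))\geq 0$, we obtain $A\leq e^{-t}\leq 1$, while \eqref{ccm} simultaneously yields $\log m^{1/n}\leq 2t$. Therefore
$$A\,\log m^{1/n} \leq 2t\,e^{-t} \ll 1,$$
uniformly in $t\geq 0$. Plugging these two estimates back into the Theorem \ref{th1} bound produces the first claim, $\ll c(s)+\log^3 n$.

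For the \emph{moreover} statement, the stronger hypothesis $|D(F)|\geq (\log n)^{15n(n-1)}$ forces $t\geq 3\log\log n$, so that
$$A\leq e^{-3\log\log n} = (\log n)^{-3}.$$
The $\log^3 n$ contribution then becomes $\log^3 n\cdot A\leq 1$ and is absorbed into $c(s)$, while $c(s)(1+\log m^{1/n})A \ll c(s)$ by the same $(1+2t)e^{-t}\ll 1$ estimate, valid for all $t\geq 0$. This yields the second bound $\ll c(s)$. The entire argument is a one-page computation; the only \emph{content} is that the numerical constants $2+\tfrac12$ and $15$ appearing in the corollary were calibrated precisely so that the powers of $|D(F)|$ beat the logarithmic and polynomial factors in $m$ and $n$.
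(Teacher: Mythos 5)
Your proposal is correct and is essentially the paper's own argument: a direct substitution of the hypothesis \eqref{ccm} into the bound of Theorem \ref{th1}, using that $m^{2/n}|D(F)|^{-1/(n(n-1))}\leq |D(F)|^{-1/(5n(n-1))}$ and that the remaining logarithmic factors are killed by this negative power of $|D(F)|$. The paper phrases the absorption of $\log m^{1/n}$ via the elementary inequality $\log x\leq \sqrt{x}$ rather than your parametrization $t=\log|D(F)|/(5n(n-1))$ and the bound $(1+2t)e^{-t}\ll 1$, but these are the same trivial calculus fact in different clothing.
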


The corollary immediately follows from Theorem \ref{th1} on noticing that $\log |D(F)|^{\frac{1}{(2+1/2)n(n-1)}}\leq|D(F)|^{\frac{1}{2(2+1/2)n(n-1)}}$, 
so when $m$ satisfies \eqref{ccm}, 
we have that
\begin{align*}
\log m^{1/n}  m^{2/n} &\leq 
|D(F)|^{\frac{1/4}{(1+\frac{1}{4})n(n-1)}} |D(F)|^{\frac{1}{(1+\frac{1}{4})n(n-1)}}\\
&\leq |D(F)|^{\frac{1}{n(n-1)}},
\end{align*} and when $|D(F)| \geq (\log n)^{15n(n-1)}$,
$$
m^{2/n} \log^3 n \leq |D(F)|^{\frac{1}{n(n-1)}}.
$$


Note that we will regard $(x,y)$ and $(-x,-y)$ as one solution, and can assume $y \geq 0$ or $x\geq 0$ if convenient.

\section{Preliminaries}\label{secNot}\label{pre}

\subsection{ Discriminant, Height, and Mahler Measure}

For a binary form $G(x , y)$ that factors over $\mathbb{C}$ as
$$
\prod_{i=1}^{n} (\alpha_{i} x - \beta_{i}y),
$$
the discriminant $D(G)$ of $G$ is given by
$$
D(G) = \prod_{i<j} (\alpha_{i} \beta_{j} - \alpha_{j}\beta_{i})^2.
$$
Therefore, if we write
$$
G(x, y) = a_n (x - \gamma_{1}y)\ldots  (x- \gamma_{n}y),
$$ 
we have
$$
D(G) = a_n ^{2(n-1)} \prod_{i<j} (\gamma_{i}  - \gamma_{j})^2.
$$

The Mahler measure $M(G)$ is defined by
$$
M(G) = |a_n| \prod_{i =1}^{n} \max(1 , \left| \gamma_{i}\right|).
$$
 Mahler \cite{Mah} showed
\begin{equation}\label{mahD5}
 M(G) \geq \left(\frac{|D(G)|}{n^n}\right)^{\frac{1}{2n -2}}.
 \end{equation}
 
 If we write  $G(x , y)=a_{n}x^{n} + a_{n-1}x^{n-1} y+ \ldots + a_{1}x y^{n-1} + a_{0}y^n$,
 the (naive) height of $G$, denoted by $H(G)$, is defined by 
\begin{equation}\label{height}
H(G) =\max  \left( |a_{n}|, |a_{n-1}|, \ldots , |a_{0}|\right).
\end{equation}
We have 
\begin{equation}\label{Lan5}
  {n \choose \lfloor n/2\rfloor}^{-1} H(G) \leq M(G) \leq (n+1)^{1/2} H(G).
\end{equation}
A proof of this fact can be found in \cite{Mahext}.

\subsection{ $GL_{2}(\mathbb{Z})$ Actions and Equivalent Forms}

Let $A = \left( \begin{array}{cc}
a & b \\
c & d \end{array} \right)$
and define the binary form $F_{A}$ by
$$
F_{A}(x , y) = F(ax + by , cx + dy).$$
Note that
\begin{equation}\label{St6}
D(F_{A}) = (\textrm{det} A)^{n (n-1)} D(F),
\end{equation}
and $D(F_A)=D(F)$ if $A\in\GL$.

We say that two binary forms $F$ and $G$ are equivalent if $G = \pm F_{A}$ for some $A \in GL_{2}(\mathbb{Z})$.
The number of solutions (and the number of primitive solutions) to Thue  inequalities does not change if we replace the binary form with an 
equivalent form.  However,  $GL_2(\mathbb{Z})$-actions do not preserve  the number of 
nonzero coefficients of $F$.
 Schmidt formulates in \cite{S87}  a condition that is invariant under
$GL_2(\mathbb{Z})$ actions.
He defines a class $C(t)$ of forms of fixed degree as follows. \\
\textbf{Definition of $C(t)$}. We define the set $C(t)$ as the set of forms
$F(x, y)$ of degree $n$ with integer coefficients, and irreducible over $\mathbb{Q}$, such that for any
reals $(u,v)\neq (0,0)$,
the form 
\begin{equation}\label{C(t)}
uF_x+vF_y 
\end{equation}
has at most $t$ real zeros. 

Note that for $n > 0$, the irreducibility of $F$ implies that
the form \eqref{C(t)} of degree $n - 1$ is not identically zero. Note also that for $F\in C(t)$,
the derivative $F_x(z, 1)$ has $\leq t$ real zeros and $F(z,1)$ has  $\leq t+1$ real zeros. The following is Lemma 2 of \cite{S87}.
 \begin{lemma} \label{Ct}
 Suppose $F(x ,y)$ is irreducible of degree $n$, and  has  $s+1$ non-vanishing coefficients. Then $F(x , y) \in C(4s -2)$. 
  \end{lemma}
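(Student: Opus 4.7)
My strategy is to count the number of distinct monomials appearing in the dehomogenization of $uF_x + vF_y$ and then apply Descartes' rule of signs.

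Writing $F(x,y) = \sum_{i=0}^s a_i x^{n_i} y^{n - n_i}$ with $0 = n_0 < \cdots < n_s = n$, the partial derivatives are
\[
F_x = \sum_{i=1}^s n_i a_i\, x^{n_i - 1} y^{n - n_i}, \qquad F_y = \sum_{i=0}^{s-1} (n - n_i) a_i\, x^{n_i} y^{n - n_i - 1},
\]
so, separating the extremal terms,
\[
uF_x + vF_y = vna_0\, y^{n-1} + una_s\, x^{n-1} + \sum_{i=1}^{s-1} a_i\, x^{n_i - 1} y^{n - n_i - 1}\bigl(un_i\, y + v(n-n_i)\, x\bigr).
\]
Dehomogenizing via $z = x/y$ gives a univariate polynomial $p(z)$ whose only possibly nonzero coefficients lie in degrees $0$, $n-1$, and $n_i - 1, n_i$ for $1 \leq i \leq s-1$; in particular $p(z)$ has at most $2s$ nonzero terms, regardless of whether some of these exponents collide.

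Now suppose $u, v \neq 0$. Then the constant term $vna_0$ and the leading coefficient $una_s$ of $p(z)$ are both nonzero, so $z=0$ is not a root of $p$ and the point at infinity $[1:0]$ is not a zero of the binary form $uF_x+vF_y$. Descartes' rule applied to $p(z)$ bounds the positive real roots by $2s-1$, and applied to $p(-z)$ bounds the negative real roots by $2s-1$, for a total of at most $4s-2$ real zeros. For the degenerate cases $u=0$ or $v=0$, only $F_y$ or $F_x$ enters, each having at most $s$ nonzero coefficients; the same Descartes argument, together with at most one additional zero at $[1:0]$ or $[0:1]$, gives at most $2s-1 \leq 4s-2$ real zeros.

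The only thing to watch is purely combinatorial: degree collisions between the $F_x$ and $F_y$ contributions (and with the extremal monomials $y^{n-1}$ and $x^{n-1}$) could in principle alter the tally. Since merging only decreases the number of distinct monomials, the sparsity bound $2s$ on $p(z)$ is unaffected, and Descartes' rule delivers $4s-2$ uniformly. Note also that irreducibility of $F$ is what guarantees $uF_x + vF_y \not\equiv 0$, so counting its real zeros is meaningful.
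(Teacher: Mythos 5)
Your proof is correct. The paper does not reproduce an argument here — it cites this as Lemma 2 of \cite{S87} — but your route (group the monomials of $uF_x + vF_y$, observe that $(uF_x+vF_y)(z,1)$ has at most $2s$ nonzero terms, apply Descartes' rule of signs separately to $p(z)$ and $p(-z)$, and note that $z=0$ and $[1:0]$ are excluded when $uv\neq 0$) is the standard proof and matches Schmidt's original treatment in substance. One point worth making explicit: ``real zeros'' of the form $uF_x+vF_y$ must be read as \emph{distinct} points of $\mathbb{P}^1(\mathbb{R})$, which is the reading consistent with the paper's later use of the $C(t)$ condition — for instance in the proof of Lemma \ref{lmMS88}, where the zeros of $f f'$ are used to partition the real line into finitely many intervals. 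Your handling of the degenerate cases $u=0$ or $v=0$ tacitly relies on this convention, since the projective zero at $[1:0]$ (resp.\ $[0:1]$) may have high multiplicity yet contributes only one to the count; with multiplicities the bound would fail in those cases, but with distinct zeros your tally $2(s-1)+1 = 2s-1 \leq 4s-2$ is exactly right.
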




\section{General strategies for Theorems \ref{th2} and \ref{th1}}

\textbf{Definition of Primitive Solutions}. A pair $(x , y) \in \mathbb{Z}^2$ is called a primitive solution to the inequality \eqref{1}  if it satisfies the inequality and  
$\gcd(x , y) = 1$.  \\
We note that by this definition  the possible solutions $(z, 0)$ and $(0, z)$ are considered primitive if and only  if $z = \pm 1$. 
\\

\textbf{Definitions of $N(F,m)$, $P(F,m)$ and $\tilde P(F,m)$.}
For an irreducible binary form $F(x,y)\in\Z[x,y]$ of degree $n\geq 3$, we denote by 
$N(F,m)$ the number of solutions of $F(x,y)\leq m$ and by $P(F,m)$ the number of primitive solutions. Further, we write 
 $\tilde P(F,m)$ for the number of primitive solutions  of
\begin{equation}\label{P}
2^{-n}m\leq F(x,y) <m.
\end{equation}
Note that $\tilde P(F,m)$ is not affected if we replace $F$ by an equivalent form. 
We will show that for $F$ of the form \eqref{F},
\begin{equation}\label{b2}
\tilde P(F,m) \ll s m^{2/n} \qquad\mbox{for $|D(F)| >(n(n-1))^{8n(n-1)}$}
\end{equation}
and
\begin{equation}\label{b1}
\tilde P(F,m) \ll (c(s)(1+\log m^{\frac{1}{n}})+\log^3 n) m^{\frac{2}{n}}|D(F)|^{-\frac{1}{n(n-1)}}
\end{equation}
with no restriction on the discriminant $D(F)$.

Once we obtain these upper bounds for $\tilde P(F,m)$, it is easy to deduce the same upper bounds for
$N(F,m)$. We follow the argument in  \cite[section 3]{S87}. Write 
$$
\tilde P(F,m)=(A_1(F)+A_2(F)(1+\log m^{\frac{1}{n}}))m^{\frac{2}{n}},
$$
where $A_1(F), A_2(F)$ depend only on $F$.

When $u$ is the integer with $2^{nu}\leq m< 2^{n(u+1)}$, then
\begin{align}
P(F,m)\leq P(F,2^{n(u+1)}-1) &= \sum_{j=1}^{u+1} \tilde P(F,2^{nj}) \nonumber\\
&\ll A_1(F) \sum_{j=1}^{u+1} 2^{2j} + A_2(F)\sum_{j=1}^{u+1} 2^{2j}(1+\log 2^j)\nonumber\\
&\ll A_1(F) 2^{2u} + A_2(F)2^{2u}(1+u)\nonumber\\
&\ll (A_1(F) + A_2(F)(1+\log m^{\frac{1}{n}}))m^{\frac{2}{n}}. \label{PFm}
\end{align}

Assume \eqref{b1} and \eqref{b2}. Then \eqref{PFm} implies
\begin{equation}\label{b11}
 P(F,m)\ll (c(s)(1+\log m^{\frac{1}{n}})+\log^3 n) m^{\frac{2}{n}}|D(F)|^{-\frac{1}{n(n-1)}}
\end{equation}
and
\begin{equation}\label{b22}
 P(F,m)\ll s m^{2/n} \qquad\mbox{for $|D(F)| >(n(n-1))^{8n(n-1)}$}. 
\end{equation}

Let $\pi(F,m)$ be the number of primitive solutions of $F(x,y)=m$.
Then 
$$
\pi(F,m)=P(F,m)-P(F,m-1)
$$
(with $P(F,0)=0$). With $[\cdot]$ denoting integer part, we have
\begin{align*}
N(F,m)&=\sum_{k=1}^m \pi(F,k) \left[\left(\dfrac{m}{k}\right)^{1/n}\right] \leq m^{1/n} \sum_{k=1}^m \pi(F,k) k^{-1/n}\\
&=m^{1/n} \sum_{k=1}^m (P(F,k)-P(F,k-1)) k^{-1/n}\\
&\ll P(F,m) m^{-1/n} \sum_{k=1}^{m} (k^{2/n} - (k-1)^{2/n})k^{-1/n}\\
&\ll P(F,m) m^{-1/n} \sum_{k=1}^{m}  k^{1/n} - (k-1)^{1/n}\\
&\ll P(F,m)
\end{align*}
since the sum is telescoping.

Hence the whole difficulty in Theorems \ref{th2} and \ref{th1}  is to bound  $\tilde P(F,m)$, the number of solutions to \eqref{P}. 
We will split the count of  possible solutions to \eqref{P} into small and large solutions for Theorem \ref{th2}, and
small, medium and large solutions for Theorem \ref{th1}. The definitions of small and large
will differ for the two theorems. However, a common argument is used for small solutions.  
We use the classical decomposition of $F(x,y)$ into linear forms introduced by Bombieri and Schmidt in \cite{Bom} and used in several works afterwards
to estimate small solutions.
We will also use a lemma by Mueller and Schmidt (recorded here as Lemma \ref{lmMS88}) in a similar way as in 
\cite[section 4]{AB}. 
This lemma  is crucial for the treatment of large solutions in Theorem \ref{th2}. Mueller and Schmidt formulated their lemma in terms
of forms with $s$ nonzero coefficients, but in fact this lemma can be applied to a larger class of forms, and this is the reason why we exploit it so much
in this paper. We combine it with an argument from \cite{Thu}, which is based on  
the Lewis-Mahler inequality (Lemma \ref{Ste}) on the approximation  by the roots of $F$  to the rationals $\frac{x}{y}$, where $(x,y)$ are integral 
solutions to \eqref{1}, together with a gap type result due to Schmidt \cite{Sbook}. 

The bound for medium solutions for Theorem \ref{th1} is  an extension of the argument in \cite[section 5]{AB}. The bound 
for large solutions  is given by a result of Mueller and Schmidt in \cite{MS88}.
In the calculation of the three bounds (for small, medium and large solutions) for Theorem \ref{th1}, we use some results that need the 
assumption that $F$ has $s$ nonzero coefficients, and some other results that need the assumption that $F$ has minimal Mahler measure. Combining both
 assumptions can be a problem a priori, and this may be a reason for the existence of the two simultaneous papers \cite{S87} and \cite{MS88}, where
 each of them assumes exactly one of the two hypothesis. The way we are able to combine both hypothesis here (in section \ref{sP1}) is also new.
\\

The rest of the manuscript is organized as follows. We discuss the argument for small solutions for both theorems in section \ref{ssmall}, and then 
apply it to the specific definitions of `small' in sections   \ref{ssmall2} and \ref{ssmall1}.
In sections 5-7 we focus on Theorem \ref{th2}. In section \ref{sP2} we give the definitions of small and large solutions,  we give the results that 
essentially count them (see Propositions \ref{p2small} and \ref{p2large}) and bound
$\tilde P(F,m)$ for large discriminants assuming them.
We prove Propositions \ref{p2small} and \ref{p2large} in sections \ref{ssmall2} and \ref{slarge} respectively. 

 In sections 8-10 we focus on Theorem \ref{th1}. In section \ref{sP1} we give the definitions of small, 
medium and large solutions, and again we give the results  that essentially count those solutions 
(see Propositions \ref{ppsmall}, \ref{ppmedium} and \ref{pplarge}), 
and bound $\tilde P(F,m)$ assuming them.
In sections  \ref{ssmall1} and \ref{smed} we prove Propositions \ref{ppsmall} and \ref{ppmedium} respectively.


  \section{Small solutions}\label{ssmall}


  Let $F(x,y)\in\mathbb{Z}[x,y]$ be an irreducible binary form of degree $n\geq 3$ that lies in $C(4s-2)$.
  Let $M$ be the smallest Mahler measure
among the forms equivalent to $F$, and  $m$ be a positive integer such that
\begin{equation}\label{m2}
 m\leq \dfrac{M}{100^n}.
 \end{equation}

 Under these assumptions,
   we give an upper bound for the number of solutions $(x,y)$ of \eqref{P} that satisfy $1\leq y <Y$, for a constant $Y$. Our bound will 
of course depend on $Y$ and will be applied later to two different values of $Y$. Similarly we give an upper bound for the number of solutions with 
$1\leq x <Y$.

We suppose that there is at least one primitive solution of \eqref{P} with $1\leq y <Y$. We fix  such a solution
$(x^\ast,y^\ast)$  such that 
$y^\ast\leq  y$ for all primitive  solutions $(x,y)$. Note that any primitive solution $(x,y)\neq(\pm 1,0)$ will have $y\geq 1$.
\\


\textbf{Definition of $L_i(x,y)$.} For the binary form
$$
F(x,y)=a_n(x-\alpha_1y)\ldots(x-\alpha_ny),
$$
we define
$$
L_i(x,y)=x-\alpha_iy\qquad (i=1,\dots,n).
$$
Here $\alpha_1,\ldots,\alpha_n$ are the roots of the polynomial $F(x,1)$.

Given $\bm{x} = (x , y)$ and $\bm{x'} = (x' , y')$, we define
\begin{equation}\label{defofD}
\mathcal{D}(\bm x,\bm x')=xy'-x'y.
\end{equation}

\begin{lemma}\label{S56}
Suppose $\bm{x}=(x,y)$ is a primitive solution of \eqref{P}. We have
$$
\frac{L_i(x^\ast,y^\ast)}{L_{i}(x , y)} - \frac{L_j(x^\ast,y^\ast)}{L_{j}(x , y)} = (\beta_{j} - \beta_{i}) \mathcal{D}(\bm x,\bm x^\ast),
$$
where $\beta_{1}$,\ldots, $\beta_{n}$ depend on $(x,y)$  and are such that the form
$$
J(u , w) = F(x,y) (u - \beta_{1}w)\ldots (u - \beta_{n}w)
$$
is equivalent to $F$.
\end{lemma}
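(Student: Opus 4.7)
The plan is to construct the $\beta_i$ explicitly from an $SL_2(\mathbb{Z})$-transform of $F$ associated to the primitive solution $\bm x$, and then verify the claimed identity by a direct algebraic expansion.

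First, since $\bm x=(x,y)$ is primitive, $\gcd(x,y)=1$, so I can pick integers $p,q$ with $xq-yp=1$ and form the matrix $A=\begin{pmatrix} x & p \\ y & q \end{pmatrix}\in SL_2(\mathbb{Z})$. Using the factorization $F(x,y)=a_n\prod_i(x-\alpha_iy)$, I compute
$$
F_A(u,w)=F(xu+pw,\,yu+qw)=a_n\prod_{i=1}^n\bigl(L_i(x,y)\,u+L_i(p,q)\,w\bigr)=F(x,y)\prod_{i=1}^n(u-\beta_iw),
$$
where I set $\beta_i:=-L_i(p,q)/L_i(x,y)$. Thus $J=F_A$ is equivalent to $F$ since $A\in SL_2(\mathbb{Z})$, which disposes of the second assertion of the lemma.

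Second, I verify the identity by computing both sides to a common denominator $L_i(x,y)L_j(x,y)$. For the left-hand side, expanding the numerator
$$
(x^\ast-\alpha_iy^\ast)(x-\alpha_jy)-(x^\ast-\alpha_jy^\ast)(x-\alpha_iy)
$$
and collecting terms gives $(\alpha_j-\alpha_i)(xy^\ast-x^\ast y)=(\alpha_j-\alpha_i)\mathcal{D}(\bm x,\bm x^\ast)$. For $\beta_j-\beta_i$, the same manipulation with $(p,q)$ in place of $(x^\ast,y^\ast)$ produces the numerator $(\alpha_j-\alpha_i)(xq-yp)=\alpha_j-\alpha_i$, using $\det A=1$. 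Multiplying $(\beta_j-\beta_i)=(\alpha_j-\alpha_i)/(L_i(x,y)L_j(x,y))$ by $\mathcal{D}(\bm x,\bm x^\ast)$ matches the left-hand side exactly.

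There is no real obstacle here: the only ``choice'' is the auxiliary primitive completion $(p,q)$ of $\bm x$, which exists precisely because the solutions considered are primitive, and the condition $\det A=\pm 1$ is exactly what is needed both to make $J$ equivalent to $F$ and to eliminate the factor $xq-yp$ in the computation of $\beta_j-\beta_i$. The $\beta_i$ depend on the auxiliary choice of $(p,q)$, but different choices differ by replacing $A$ with $AU$ for a unipotent $U\in SL_2(\mathbb{Z})$, which does not affect the statement.
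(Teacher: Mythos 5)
Your proof is correct, and it essentially reconstructs the argument behind Lemma 3 of Bombieri--Schmidt \cite{Bom}, which the paper cites without reproducing. You complete $\bm x=(x,y)$ to a unimodular matrix $A=\begin{pmatrix} x & p \\ y & q\end{pmatrix}$, factor $F_A(u,w)=a_n\prod_i\bigl(L_i(x,y)u+L_i(p,q)w\bigr)=F(x,y)\prod_i(u-\beta_iw)$ with $\beta_i=-L_i(p,q)/L_i(x,y)$, and then the identity reduces to two instances of the elementary determinant expansion $L_i(\bm a)L_j(\bm b)-L_j(\bm a)L_i(\bm b)=(\alpha_j-\alpha_i)(b_1a_2-a_1b_2)$, once with $\bm a=\bm x^\ast$, $\bm b=\bm x$ (giving $(\alpha_j-\alpha_i)\mathcal D(\bm x,\bm x^\ast)$) and once with $\bm a=(p,q)$, $\bm b=\bm x$ (where the factor $xq-yp=\det A=1$ disappears). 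The only cosmetic gap is that you assert the collected-terms identity without writing out the four-term expansion, but that is a routine computation; and your closing remark that a different completion $(p,q)$ changes $A$ only by right multiplication by a unipotent matrix, hence shifts all $\beta_i$ by a common integer and leaves both $J$ up to equivalence and the differences $\beta_j-\beta_i$ unchanged, is correct and a nice addition. Since the paper's ``proof'' is a bare citation, your write-up is, if anything, more self-contained than the source while following the same construction.
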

\begin{proof}
 This is  Lemma 3 of \cite{Bom}.
\end{proof}

For a primitive solution $(x,y)$ of \eqref{P}, we have
\begin{equation}\label{F/F}
 \dfrac{F(x^\ast,y^\ast)}{F(x,y)} < 2^n.
\end{equation}
Let $i_0\in\{1,\ldots,n\}$ be the index such that  
\begin{equation}
 \left|\dfrac{L_{i_0}(x^\ast,y^\ast)}{L_{i_0}(x,y)}\right|=\min_{1\leq i\leq n}\left|\dfrac{L_i(x^\ast,y^\ast)}{L_i(x,y)}\right|,
 \end{equation}
 so that, by \eqref{F/F},
 \begin{equation}\label{i1}
 \left|\dfrac{L_{i_0}(x^\ast,y^\ast)}{L_{i_0}(x,y)}\right|\leq 2.
 \end{equation}
 
By Lemma \ref{S56} and \eqref{i1},
\begin{equation}\label{S57}
\frac{|L_i(x^\ast,y^\ast)|}{\left|L_{i}(x , y) \right|} \geq |\beta_{i_0} - \beta_{i}| |\mathcal{D}(\bm x,\bm x^\ast)| - 2.
\end{equation}
For the complex conjugate  $\bar{\beta}_{i_0}$  of $\beta_{i_0}$, we also have
$$  
\frac{|L_i(x^\ast,y^\ast)|}{\left|L_{i}(x , y) \right|} \geq |\bar{\beta}_{i_0} - \beta_{i}| |\mathcal{D}(\bm x,\bm x^\ast)| - 2.
$$
Hence
$$
\frac{|L_i(x^\ast,y^\ast)|}{\left|L_{i}(x , y) \right|} \geq |\textrm{Re}(\beta_{i_0}) - \beta_{i}| |\mathcal{D}(\bm x,\bm x^\ast)| - 2,
$$
where $\textrm{Re}(\beta_{i_0})$ is the real part of $\beta_{i_0}$.
Now we choose an integer $d = d(x , y)$, with $|\textrm{Re}(\beta_{i_0}) -d|\leq 1/2$, and we obtain 
\begin{equation}\label{S58}
\frac{|L_i(x^\ast,y^\ast)|}{\left|L_{i}(x , y) \right|} \geq \left(|d- \beta_{i}| -\frac{1}{2}\right) |\mathcal{D}(\bm x,\bm x^\ast)| -2,
\end{equation}
for $i = 1,\ldots , n$.
\\

\noindent \textbf{Definition of the sets  $\frak{X}_{i}$}.
Let $\frak{X}_{i}$ be the set of primitive solutions $(x,y)\neq (x^\ast,y^\ast)$ of \eqref{P} with $1\leq y\leq Y$ and $|L_i(x,y)|\leq \frac{1}{2y}$, where $1\leq i \leq n$.

We note that if $\alpha_{i}$ and $\alpha_{j}$ are complex conjugates then $\frak{X}_{i} = \frak{X}_{j}$.

\begin{lemma}\label{Sl5}
 Suppose $(x , y)$ and $( x' , y')$ are two distinct primitive solutions in $\frak{X}_{i}$,  with $1\leq  y \leq  y'$ . Then
$$
\frac{ y'}{y} \geq \frac{2}{11} \max(1 , |\beta_{i}(x , y) - d(x , y)|).
$$
\end{lemma}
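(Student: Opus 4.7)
My plan is to adapt the derivation of \eqref{S58} to the present setting by replacing $(x^\ast, y^\ast)$ with $(x', y')$ throughout. Both $F(x, y)$ and $F(x', y')$ lie in $[2^{-n}m, m)$ by \eqref{P}, so $F(x', y')/F(x, y) < 2^n$; arguing exactly as for \eqref{i1}, there is an index $i_0' = i_0'(\bm x, \bm x')$ with $|L_{i_0'}(x', y')/L_{i_0'}(x, y)| \leq 2$. Repeating the derivation leading from Lemma~\ref{S56} through \eqref{S57} to \eqref{S58}, but with $i_0'$ in place of $i_0$ and $(x', y')$ in place of $(x^\ast, y^\ast)$, I obtain, for the index $i$ defining $\X_i$,
\[
\frac{|L_i(x', y')|}{|L_i(x, y)|} \ge |\Re(\beta_{i_0'}(x, y)) - \beta_i(x, y)| \cdot |\D(\bm x, \bm x')| - 2.
\]

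Next, I would transfer $\Re(\beta_{i_0'}(x, y))$ to the integer $d(x, y)$, which was chosen close to $\Re(\beta_{i_0}(x, y))$ from the pair $(\bm x, \bm x^\ast)$ rather than $(\bm x, \bm x')$. Applying Lemma~\ref{S56} once to $(\bm x, \bm x^\ast)$ with indices $(i_0, i_0')$ and once to $(\bm x, \bm x')$ with the same indices, and using the two $\leq 2$ bounds on the minimizing ratios together with $|\D(\bm x, \bm x^\ast)|, |\D(\bm x, \bm x')| \geq 1$, one bounds $|\beta_{i_0} - \beta_{i_0'}|$ by a quantity comparable to $y'/y$; combined with $|d - \Re(\beta_{i_0})| \leq 1/2$ this yields $|\Re(\beta_{i_0'}) - \beta_i| \ge |\beta_i - d| - \tfrac{1}{2} - O(y'/y)$.

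Third, from $\D(\bm x, \bm x') = y' L_i(x, y) - y L_i(x', y')$ and the $\X_i$-bounds $|L_i(x, y)| \leq 1/(2y)$, $|L_i(x', y')| \leq 1/(2y')$, the triangle inequality gives $|\D(\bm x, \bm x')| \leq y'/(2y) + y/(2y') \leq y'/y$. The reverse triangle inequality gives $|L_i(x, y)| \geq |\D(\bm x, \bm x')|/y' - y/(2y'^2) \geq 1/(2y')$, using $|\D(\bm x, \bm x')| \geq 1$ (primitivity of the two distinct solutions) and $y \leq y'$, so $|L_i(x', y')|/|L_i(x, y)| \leq 1$. Feeding these into the inequality of the first step and using $|\D(\bm x, \bm x')| \geq 1$, I can solve for $y'/y$ to obtain $y'/y \geq (2/11) \max(1, |\beta_i(x, y) - d(x, y)|)$ after collecting constants.

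The main obstacle is step two: transferring from the index $i_0'$, which is the one naturally produced by the adapted derivation for the pair $(\bm x, \bm x')$, back to the quantity $d(x, y)$, which is tied to the different pair $(\bm x, \bm x^\ast)$. The numerical constant $2/11$ will arise from the simultaneous accumulation of the $-1/2$ built into the choice of $d$, the $-2$ from the minimizing ratio, and the $\leq 1$ upper bound on $|L_i(x', y')/L_i(x, y)|$; the clause $\max(1, |\beta_i - d|)$ reflects that the nontrivial content of the lemma is the second argument, the first being immediate from $y' \geq y \geq 1$.
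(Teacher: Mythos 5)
Your proposal takes a fundamentally different and ultimately unworkable route: you try to re-derive an analogue of \eqref{S58} with $(x',y')$ in place of $(x^\ast,y^\ast)$, which forces you to introduce a second minimizing index $i_0'$ tied to the pair $(\bm x,\bm x')$ and then ``transfer'' back to $d(x,y)$, which was defined from $i_0$ and the pair $(\bm x,\bm x^\ast)$. The transfer step is a genuine gap. To bound $|\beta_{i_0}-\beta_{i_0'}|$ via Lemma~\ref{S56} applied to $(\bm x,\bm x^\ast)$ with indices $(i_0,i_0')$, you would need control of $|L_{i_0'}(x^\ast,y^\ast)/L_{i_0'}(x,y)|$, and applied to $(\bm x,\bm x')$ you would need control of $|L_{i_0}(x',y')/L_{i_0}(x,y)|$; in both cases the ``other'' ratio is the one you do not control --- the $\leq 2$ bound only holds at the minimizing index for the respective pair. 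So the claimed estimate $|\beta_{i_0}-\beta_{i_0'}|=O(y'/y)$ is not substantiated, and even if it were, feeding an $O(y'/y)$ error into the inequality for $|\Re(\beta_{i_0'})-\beta_i|$ puts $y'/y$ on both sides without a clean way to absorb constants into $2/11$.

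The paper does not need any of this. The key observation you missed is that \eqref{S58}, as it stands (with $\bm x^\ast$ and $d=d(x,y)$ chosen near $\Re(\beta_{i_0})$ for the pair $(\bm x,\bm x^\ast)$), already provides a lower bound for $1/|L_i(x,y)|$ in terms of $|d-\beta_i|$ and $|\mathcal{D}(\bm x,\bm x^\ast)|$. The role of $(x',y')$ is only to supply the Liouville-type estimate $y'\geq 1/(2|L_i(x,y)|)$, which follows from $1\leq |y'x-yx'|\leq y|L_i(x',y')|+y'|L_i(x,y)|\leq \frac{1}{2}+y'|L_i(x,y)|$ using the $\mathfrak{X}_i$ membership of both points. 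Multiplying these two bounds and invoking \eqref{32} to lower bound $|\mathcal{D}(\bm x,\bm x^\ast)|/(y|L_i(x^\ast,y^\ast)|)$ by $2/3$, one reaches $y'/y\geq \max\bigl(1,\frac{1}{3}(|d-\beta_i|-\frac{5}{2})\bigr)$, and an elementary inequality converts this to $\frac{2}{11}\max(1,|d-\beta_i|)$. There is no need to change the reference point from $\bm x^\ast$ to $\bm x'$, and indeed the whole section deliberately fixes $\bm x^\ast$ once and for all precisely so that the quantities $\beta_i(x,y)$, $d(x,y)$ depend only on $(x,y)$ and can be multiplied over $\mathfrak{X}$ later in \eqref{E3}.
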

\begin{proof} We follow the proof of Lemma 4 of \cite{Bom} and Lemma 4.3 of \cite{AB}. 
We have that 
\begin{align}\label{DxSl5}
1\leq 
\left| y' x - y   x'\right|
&\leq y |L_i( x', y')|+ y' \left| L_i(x ,y)\right|\\
&\leq\dfrac{y}{2 y'}+  y' \left|L_i(x,y)\right| \nonumber \\
&\leq \dfrac{1}{2}+ y' \left|L_i(x,y)\right|. \nonumber
\end{align}
Therefore,
 $$ 
 y'\geq \frac{1}{2 \left| L_i(x,y)\right|}.
 $$
  Combining this with \eqref{S58}, we get
\begin{align}\label{pre32}
\frac{ y'}{y} &\geq \dfrac{1}{2}\Big(|d(x,y)-\beta_i(x,y)|-\dfrac{1}{2}\Big)\dfrac{|\mathcal{D}(\bm x,\bm x^\ast)|}{y|L_i(x^\ast,y^\ast)|} 
- \dfrac{1}{y|L_i(x^\ast,y^\ast)|} \nonumber \\
&\geq \dfrac{(|d-\beta_i|-\frac{5}{2}) |\mathcal{D}(\bm x,\bm x^\ast)|}{2y|L_i(x^\ast,y^\ast)|}.
\end{align}

Now, 
\begin{eqnarray}\label{32}\nonumber
\dfrac{|\mathcal{D}(\bm x,\bm x^\ast)|}{y |L_i(x^\ast,y^\ast)|} 
&=&\dfrac{\left|\dfrac{x}{y}-\dfrac{x^\ast}{y^\ast}\right|}{\left|\dfrac{x^\ast}{y^\ast}-\alpha_i\right|}
\geq
\dfrac{\left|\dfrac{x}{y}-\dfrac{x^\ast}{y^\ast}\right|}{\left|\dfrac{x^\ast}{y^\ast}-\dfrac{x}{y}\right|+\dfrac{1}{2y^2}}\\
&\geq&
\dfrac{1}{1+\dfrac{1}{2y^2\left|\dfrac{x^\ast}{y^\ast}-\dfrac{x}{y}\right|}}\geq \dfrac{2}{3},
\end{eqnarray}
where the last inequality is because  $\left|\dfrac{x^\ast}{y^\ast}-\dfrac{x}{y}\right|\geq \dfrac{1}{y^\ast y}$ and so
$$
\dfrac{1}{2y^2\left|\dfrac{x^\ast}{y^\ast}-\dfrac{x}{y}\right|}\leq \dfrac{y^\ast}{2y}\leq \dfrac{1}{2}.
$$
Therefore, by \eqref{pre32} and \eqref{32}, we have
\begin{align*}
\frac{ y'}{y} &\geq \max\Big(1,\Big(|d-\beta_i|-\dfrac{5}{2}\Big)\dfrac{1}{3}\Big)\\
&\geq \dfrac{3}{11} \max\Big(1,\dfrac{2}{3}|d-\beta_i|\Big),
\end{align*}
where in the second inequality  we used that $\max(1,\frac{\zeta}{2}-a)\geq \frac{1}{2a+2} \max(1,\zeta)$ with $\zeta=\frac{2}{3}|d-\beta_i|$ and 
$a=\frac{5}{6}$.
\end{proof}

\begin{lemma}\label{Sl6}
Suppose $(x , y)\not\in\frak{X}_{i}\cup\{(x^\ast,y^\ast)\}$ is a  primitive solution of \eqref{P} with $y>0$. Then 
$$
|d(x , y) - \beta_{i}(x , y)| \leq \dfrac{11}{2}.
$$
\end{lemma}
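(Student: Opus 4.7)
The plan is to interpret the hypothesis $(x,y)\notin\frak{X}_{i}\cup\{(x^\ast,y^\ast)\}$ as furnishing the lower bound $|L_i(x,y)|>\frac{1}{2y}$ (which is what the set $\frak{X}_i$ is designed to exclude, given the implicit context $1\leq y\leq Y$ of this section on small solutions), and then to pair this with the lower bound \eqref{S58} to squeeze $|d(x,y)-\beta_i(x,y)|$. The key is to produce a matching \emph{upper} bound for $|L_i(x^\ast,y^\ast)|/|L_i(x,y)|$ of the same rough order as the lower bound, and then let the resulting inequality collapse.

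For the upper bound, I would exploit the elementary identity
$$y^\ast L_i(x,y)-y L_i(x^\ast,y^\ast) = (y^\ast x-\alpha_i yy^\ast)-(yx^\ast-\alpha_i yy^\ast)=\mathcal{D}(\bm x,\bm x^\ast),$$
in which the $\alpha_i$ cancels. The triangle inequality then gives $y|L_i(x^\ast,y^\ast)|\leq y^\ast|L_i(x,y)|+|\mathcal{D}(\bm x,\bm x^\ast)|$, and dividing by $y|L_i(x,y)|$ and using $y^\ast\leq y$ (by minimality of $y^\ast$) together with $y|L_i(x,y)|>\tfrac{1}{2}$ yields
$$\frac{|L_i(x^\ast,y^\ast)|}{|L_i(x,y)|}\leq 1+2|\mathcal{D}(\bm x,\bm x^\ast)|.$$

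Combining this upper bound with the lower bound \eqref{S58}, I obtain
$$(|d(x,y)-\beta_i(x,y)|-\tfrac{1}{2})|\mathcal{D}(\bm x,\bm x^\ast)|-2\leq 1+2|\mathcal{D}(\bm x,\bm x^\ast)|,$$
which rearranges to $(|d(x,y)-\beta_i(x,y)|-\tfrac{5}{2})|\mathcal{D}(\bm x,\bm x^\ast)|\leq 3$. Because $(x,y)\neq(x^\ast,y^\ast)$ and both pairs are primitive with positive second coordinates, the integer $\mathcal{D}(\bm x,\bm x^\ast)=xy^\ast-x^\ast y$ is nonzero, so $|\mathcal{D}(\bm x,\bm x^\ast)|\geq 1$. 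Dividing by $|\mathcal{D}(\bm x,\bm x^\ast)|$ and adding $\tfrac{5}{2}$ gives $|d(x,y)-\beta_i(x,y)|\leq\tfrac{11}{2}$.

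There is no serious obstacle: the argument is essentially two triangle inequalities glued at \eqref{S58}. The only point requiring a moment of care is the translation of $(x,y)\notin\frak{X}_i$ into the pointwise bound $|L_i(x,y)|>\tfrac{1}{2y}$, which relies on the running assumption $1\leq y\leq Y$ implicit in the small-solutions setup; once this is granted, every other step is an algebraic rearrangement together with the fact that $|\mathcal{D}(\bm x,\bm x^\ast)|$ is a positive integer.
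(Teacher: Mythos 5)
Your argument is correct and reaches the same bound $\tfrac{11}{2}$ by essentially the same route as the paper: both proofs feed the $\frak{X}_i$-exclusion bound $|L_i(x,y)|>\tfrac{1}{2y}$ and the size relation between $\mathcal{D}(\bm x,\bm x^\ast)$ and $|L_i(x^\ast,y^\ast)|$ back into \eqref{S58} and then invoke $|\mathcal{D}(\bm x,\bm x^\ast)|\geq 1$. The only cosmetic difference is that you package the triangle inequality as the linear identity $y^\ast L_i(x,y)-yL_i(x^\ast,y^\ast)=\mathcal{D}(\bm x,\bm x^\ast)$ to bound $|L_i(x^\ast,y^\ast)|/|L_i(x,y)|$ by $1+2|\mathcal{D}(\bm x,\bm x^\ast)|$, whereas the paper bounds the ratio $|L_i(x^\ast,y^\ast)|/(|L_i(x,y)|\,|\mathcal{D}(\bm x,\bm x^\ast)|)$ directly by $3$ via the triangle inequality on $\alpha_i-x/y$, $\alpha_i-x^\ast/y^\ast$, $x/y-x^\ast/y^\ast$; these are the same estimate rearranged.
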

\begin{proof}
 By \eqref{S58},  we have 
$$
|d-\beta_i|\leq \Big(\dfrac{|L_i(x^\ast,y^\ast)|}{|L_i(x,y)|} + 2\Big)\dfrac{1}{|\mathcal{D}(\bm x,\bm x^\ast)|}+\dfrac{1}{2}.
$$
Since $\left|\dfrac{x}{y}-\dfrac{x^\ast}{y^\ast}\right|\geq\dfrac{1}{yy^\ast}$ and we are assuming $|L_i(x,y)|>\frac{1}{2y}$, we have
$$
\dfrac{|L_i(x^\ast,y^\ast)|}{|L_i(x,y)| |\mathcal{D}(\bm x,\bm x^\ast)|}
\leq \dfrac{\left|\alpha_i-\dfrac{x}{y}\right| + \left|\dfrac{x}{y}-\dfrac{x^\ast}{y^\ast}\right|}{y^2 \left|\alpha_i-\dfrac{x}{y}\right| \left|\dfrac{x}{y}-\dfrac{x^\ast}{y^\ast}\right|}
\leq \dfrac{1}{y^2 \left|\dfrac{x}{y}-\dfrac{x^\ast}{y^\ast}\right|} + \dfrac{1}{y^2 \left|\alpha_i-\dfrac{x}{y}\right|}
\leq 3.
$$
Therefore, using that   $|\mathcal{D}(\bm x,\bm x^\ast)|\geq 1$, we conclude that
$$
|d-\beta_i|\leq \dfrac{11}{2}.
$$
\end{proof}

Let  $(x,y)$ be a fixed primitive solution to \eqref{P}. Recall that the form 
$$
J(u , w) = F(x , y) (u - \beta_{1}w)\ldots (u - \beta_{n}w)
$$
is equivalent to $F$ (see Lemma \ref{S56}). Hence the form 
$$
\hat{J}(u , w) =  F(x , y) (u - (\beta_{1}-d) w)\ldots (u - (\beta_{n}- d)w)
$$
is also equivalent to $F$. Therefore, 
\begin{equation}\label{Spre60}
\prod_{i=1}^{n} \max(1, |\beta_{i}(x, y)-d(x , y)|) \geq \frac{M(\hat{J})}{F(x,y)} \geq \dfrac{M}{m}.
\end{equation}

\textbf{Definition of $\frak{X}$}. For each set $\frak{X}_{i}$ ($i = 1, \ldots, n$)  that is not empty,  let  
$(x^{(i)} , y^{(i)}) \in \frak{X}_{i}$ be the element with the largest value of $y$. Consider
 the set of primitive solutions of \eqref{P} that are not $(x^\ast,y^\ast)$ and with $1 \leq y \leq Y$ minus the elements 
$(x^{(1)} , y^{(1)})$, \ldots, $(x^{(n)} , y^{(n)})$. We define  $\frak{X}$ to be that set together with the elements 
$(x^{(i)},y^{(i)})\in\frak{X}_i\cap\frak{X}_j$
 ($i,j \in\{1, \ldots, n\}$) such that $(x^{(i)},y^{(i)})\neq (x^{(j)},y^{(j)})$.

The rest of this section is devoted  to bound the cardinal of 
$$\frak{X}\cup\left\{(x^{(i)},y^{(i)})\right\}_{1\leq i\leq n}.
$$

\begin{lemma}
For any fixed $i \in \{1,\ldots, n\}$, we have
\begin{eqnarray}\label{E3}
 \prod_{(x , y) \in  \mathfrak{X}} \dfrac{2}{11} \max\left(1, \left|\beta_{i}(x, y) - d(x, y) \right|\right)
 \leq Y.
\end{eqnarray}
\end{lemma}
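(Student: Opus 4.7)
The strategy is to split the product according to whether $(x,y)\in\mathfrak{X}_i$, and then to run a telescoping argument on $\mathfrak{X}\cap\mathfrak{X}_i$ ordered by the $y$-coordinate. The points outside $\mathfrak{X}_i$ contribute harmlessly: by Lemma \ref{Sl6}, every $(x,y)\in\mathfrak{X}\setminus\mathfrak{X}_i$ satisfies $|d(x,y)-\beta_i(x,y)|\le 11/2$, so the corresponding factor $\frac{2}{11}\max(1,|\beta_i-d|)$ is at most~$1$. It therefore suffices to bound
$$
\prod_{(x,y)\in\mathfrak{X}\cap\mathfrak{X}_i}\frac{2}{11}\max\!\left(1,|\beta_i(x,y)-d(x,y)|\right)
$$
by $Y$.

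I would enumerate $\mathfrak{X}\cap\mathfrak{X}_i$ as $(x_1,y_1),\ldots,(x_k,y_k)$ in increasing $y$-order (two distinct primitive solutions in $\mathfrak{X}_i$ force distinct $y$'s, since both $x/y$'s would approximate $\alpha_i$ to within $1/(2y^2)$ and the corresponding $x$'s would then collapse). Applying Lemma \ref{Sl5} to each consecutive pair inside $\mathfrak{X}_i$ gives $\frac{2}{11}\max(1,|\beta_i(x_j,y_j)-d(x_j,y_j)|)\le y_{j+1}/y_j$, and multiplying for $j=1,\ldots,k-1$ telescopes to
$$
\prod_{j=1}^{k-1}\frac{2}{11}\max\!\left(1,|\beta_i(x_j,y_j)-d(x_j,y_j)|\right)\le \frac{y_k}{y_1}\le Y.
$$
So the remaining task is to absorb the top factor, the one coming from $(x_k,y_k)$.

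To absorb it, I would pair $(x_k,y_k)$ with a strictly larger element of $\mathfrak{X}_i$. In the generic case $(x_k,y_k)\neq(x^{(i)},y^{(i)})$, one more application of Lemma \ref{Sl5} to the pair $((x_k,y_k),(x^{(i)},y^{(i)}))$ supplies the missing factor as $\le y^{(i)}/y_k$, and the whole product is bounded by $y^{(i)}/y_1\le Y$. The delicate case is $(x_k,y_k)=(x^{(i)},y^{(i)})$: by the very definition of $\mathfrak{X}$ there must then exist $j\neq i$ with $(x^{(i)},y^{(i)})\in\mathfrak{X}_j$ and $y^{(j)}>y^{(i)}$, so that Lemma \ref{Sl5} applied inside $\mathfrak{X}_j$ to the pair $((x^{(i)},y^{(i)}),(x^{(j)},y^{(j)}))$ yields $\frac{2}{11}\max(1,|\beta_j-d|)\le y^{(j)}/y^{(i)}$ at the point $(x^{(i)},y^{(i)})$. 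Converting this bound on $|\beta_j-d|$ into the required bound on $|\beta_i-d|$ at the same point, exploiting that $\alpha_i$ and $\alpha_j$ are forced to be very close by $(x^{(i)},y^{(i)})$ lying in both $\mathfrak{X}_i$ and $\mathfrak{X}_j$, is the main technical hurdle of the argument.
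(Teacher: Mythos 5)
Your overall strategy matches the paper's: split the product into $\mathfrak{X}\cap\mathfrak{X}_i$, handled by a telescoping chain of applications of Lemma \ref{Sl5}, and $\mathfrak{X}\setminus\mathfrak{X}_i$, where each factor is $\le 1$ by Lemma \ref{Sl6}. The one structural difference is that the paper telescopes along all of $\mathfrak{X}_i$, not just along $\mathfrak{X}\cap\mathfrak{X}_i$: indexing $\mathfrak{X}_i$ as $(x_1^{(i)},y_1^{(i)}),\ldots,(x_v^{(i)},y_v^{(i)})$ by increasing $y$, Lemma \ref{Sl5} gives
$\frac{2}{11}\max\bigl(1,|\beta_i(x_k^{(i)},y_k^{(i)})-d(x_k^{(i)},y_k^{(i)})|\bigr)\le y_{k+1}^{(i)}/y_k^{(i)}$
for $k=1,\ldots,v-1$, and the product over these $v-1$ elements telescopes to $y_v^{(i)}/y_1^{(i)}\le Y$. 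Since $(x_v^{(i)},y_v^{(i)})=(x^{(i)},y^{(i)})$ was removed in the construction of $\mathfrak{X}$, the paper then identifies these $v-1$ elements with $\mathfrak{X}\cap\mathfrak{X}_i$ and is done; in particular there is no ``top factor'' left to absorb, so your generic case needs no extra pairing at all.

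The subtlety you flag as the ``delicate case'' is real: the definition of $\mathfrak{X}$ does allow a top element $(x^{(i)},y^{(i)})$ to be re-admitted, namely when it lies in some $\mathfrak{X}_j$ whose top is a different point. In that event $\mathfrak{X}\cap\mathfrak{X}_i$ is all of $\mathfrak{X}_i$, the $v$-th factor is not covered by the telescoping, and the paper's ``so we have'' step silently relies on $(x^{(i)},y^{(i)})\notin\mathfrak{X}$. You have correctly isolated this point, but your proposal does not close it: converting a bound on $|\beta_j-d|$ at the common point into one on $|\beta_i-d|$ is left as ``the main technical hurdle,'' and it is not routine, because the $\beta$'s are not the $\alpha$'s but roots of the $(x,y)$-dependent equivalent form $J$ from Lemma \ref{S56}, so closeness of $\alpha_i$ and $\alpha_j$ does not immediately transfer. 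As written, your argument therefore stands on the same ground as the paper's for the main case, and the case $(x^{(i)},y^{(i)})\in\mathfrak{X}$ is left open in both.
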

\begin{proof}
Fix $i\in\{1,\ldots,n\}$.
Suppose that
 the set   $\frak{X}_{i}$ is not empty. We index the elements of $\frak{X}_{i}$ as 
$$
(x_{1}^{(i)}, y_{1}^{(i)}), \ldots, (x_{v}^{(i)}, y_{v}^{(i)}),
$$
 so that $y_{1}^{(i)} \leq \ldots \leq y_{v}^{(i)}$ (note that $(x_{v}^{(i)}, y_{v}^{(i)}) = (x^{(i)} , y^{(i)})$). By Lemma \ref{Sl5},
\begin{equation}\label{yk}
\dfrac{2}{11} \max\left(1, \left|\beta_{i}(x_{k}^{(i)}, y_{k}^{(i)}) - d(x_{k}^{(i)}, y_{k}^{(i)})\right|\right) \leq 
\frac{y_{k+1}^{(i)}}{y_{k}^{(i)}}
\end{equation}
for $k = 1, \ldots, v-1$, so we have
\begin{equation}\label{prod1}
 \prod_{(x , y) \in  \frak{X} \bigcap \frak{X_{i}} } \dfrac{2}{11}  \max\left(1, \left|\beta_{i}(x, y) - d(x, y) \right|\right) \leq Y.
\end{equation} 

For any solution $(x , y)\in \frak{X}$  that does not belong to  $\frak{X_{i}}$,  by Lemma \ref{Sl6}, we have
\begin{equation}\label{E2}
\frac{2}{11} \max\left(1, \left|\beta_{i}(x, y) - d(x, y) \right|\right)  \leq 1.
\end{equation}
This, together with \eqref{prod1}, completes the proof of the lemma.
\end{proof}

Next we will  establish inequalities similar to \eqref{E3} for the solutions $(x^{(i)}, y^{(i)})$ which do not belong to $\frak{X}$. 
Lemma \ref{lmMS88} will be crucial in order to find a clever way of counting those solutions in terms of $s$.
Lemma \ref{lmMS88} is in fact Lemma 7 of \cite{MS88},  although in \cite{MS88} the lemma is stated for polynomials  with few nonzero coefficients, whereas we state it for
any polynomial that belongs to $C(4s-2)$. The proof only involves the fact that  the polynomials  and their derivatives  have few real zeros;
we reproduce it here for convenience of the reader. 
Put
\begin{equation}\label{R}
R=n^{800\log^2n}.
\end{equation}
We first need the following result.

\begin{lemma}\label{lemma 6}
Let $f(z)$ be a polynomial of degree $n$ with real coefficients. Suppose that
$f(x) f'(x)\neq 0$ for real $x\in I$, where $I$ is an interval $X_1 <x <X_2$, or a half line $x <X_2$, or $x > X_1$.
Suppose there are $u> 1$ roots $\gamma_j =x_j +i y_j$ $(j= 1,\ldots,u)$ with real parts $x_j\in I$. Then there
is a root $\gamma_\ell$ among these $u$ roots such that for every real  $\zeta$,
$$
|\zeta-\gamma_\ell| < R \min_{1\leq i\leq u}|\zeta-\gamma_i|.
$$
\end{lemma}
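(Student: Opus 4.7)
The plan is to find a root $\gamma_\ell$ that is geometrically central among the $u$ roots with real parts in $I$, and to extract centrality from the hypothesis that $ff'$ does not vanish on $I$. First, I would reduce the dynamic statement to a static one. Since $f$ has real coefficients and does not vanish on $I$, every $\gamma_j$ with $x_j \in I$ is non-real, and for any real $\zeta$ one has $|\zeta-\gamma_i|\geq |y_i|$ with equality at $\zeta=x_i$. Hence the target inequality $|\zeta-\gamma_\ell| < R \min_i|\zeta-\gamma_i|$ follows, by the triangle inequality and at the cost of replacing $R$ by $R+1$, from the purely positional statement
\[
|\gamma_\ell - x_i| < R\,|y_i| \qquad (i=1,\ldots,u).
\]
Geometrically this says $\gamma_\ell$ lies in every disk $D_i=\{z:|z-x_i|<R|y_i|\}$, so the lemma reduces to showing that such a root $\gamma_\ell$ exists.

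Next I would exploit sign-constancy of the logarithmic derivative. Since $f$ and $f'$ are real-valued and non-vanishing on $I$, the function $f$ is strictly monotone on $I$, so
\[
\frac{f'(x)}{f(x)} = \sum_{k=1}^{n} \frac{1}{x-\gamma_k}
\]
has constant sign throughout $I$. Pairing each complex conjugate pair $x_k \pm i y_k$ contributes a term $2(x-x_k)/((x-x_k)^2+y_k^2)$ whose sign flips at $x=x_k$ and whose peak magnitude is of order $1/|y_k|$. Were two of our $u$ roots well separated horizontally relative to their heights $|y_k|$, the resulting sharp opposite-sign spikes could not be compensated by the remaining $n-2$ terms, contradicting sign-constancy. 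Made rigorous, this yields a pairwise clustering bound of the form $|x_i - x_j| \leq P(n)\max(|y_i|,|y_j|)$ for some polynomial $P$.

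A single clustering step gives only a polynomial-in-$n$ bound; the super-polynomial bound $R = n^{800\log^2 n}$ emerges from a dyadic iteration. At scale $k$ one groups the $u$ roots into clusters of horizontal spread at most $2^k \min_j |y_j|$ and uses sign-constancy of $f'/f$ to force the clusters to merge at that scale, losing a factor of $\mathrm{poly}(n)$ at each merge. After $O(\log n)$ levels all $u$ roots lie within an $R$-multiple of the minimum height from a common point, and choosing $\gamma_\ell$ to be any root minimizing $|y_\ell|$ places it in every $D_i$ and finishes the proof. The hard part is making the clustering of the second paragraph quantitative: one must estimate the sum $\sum_k 1/(x-\gamma_k)$ uniformly on $I$ while controlling the contributions of roots with $x_k \notin I$ or $|y_k|$ large, and then propagate the polynomial losses faithfully through each level of the iteration --- this bookkeeping is what produces the explicit exponent $800\log^2 n$ in $R$.
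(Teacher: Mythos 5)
The paper does not actually prove this lemma; it simply cites it as Lemma~6 of Mueller--Schmidt \cite{MS88}, so there is no in-paper argument to compare against. Your opening reduction is correct: if $\gamma_\ell$ lies in every disk $D_i=\{z:|z-x_i|<R|y_i|\}$, then for any real $\zeta$, letting $i^*$ attain the minimum, one has $|\zeta-\gamma_\ell|\le|\zeta-x_{i^*}|+|x_{i^*}-\gamma_\ell|<|\zeta-\gamma_{i^*}|+R|y_{i^*}|\le(R+1)|\zeta-\gamma_{i^*}|$, since $|\zeta-\gamma_{i^*}|\ge\max(|\zeta-x_{i^*}|,|y_{i^*}|)$. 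The appeal to sign-constancy of $f'/f$ on $I$, and the observation that a conjugate pair contributes a spike of height $\asymp 1/|y_k|$ changing sign at $x_k$, are also sound and do point in the direction of the actual Mueller--Schmidt argument.

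However, the heart of the matter --- the quantitative clustering estimate and the dyadic iteration that compounds polynomial losses into the exponent $800\log^2 n$ --- is not carried out; you explicitly defer it as ``bookkeeping.'' That is where all the difficulty lives: one must bound $\sum_k 1/(x-\gamma_k)$ uniformly on $I$ while controlling contributions from the $n-u$ roots whose real parts lie outside $I$ and from roots with large $|y_k|$, show that two opposite-sign spikes at comparable heights cannot be cancelled by the remaining terms, and then propagate the resulting merge bound through $O(\log n)$ levels without the accumulated factor exceeding $R$. Also, your final step --- that after the iteration the root of minimal $|y_\ell|$ lands in every $D_i$ --- is asserted rather than derived from the clustering conclusion. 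As written this is a plausible proof plan consistent with the Mueller--Schmidt strategy, but not a proof; to be complete you would either have to supply the quantitative clustering lemma and the iteration, or simply cite \cite{MS88} as the paper does.
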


\begin{proof} 
 This is Lemma 6 of \cite{MS88}.
\end{proof}

\begin{lemma}\label{lmMS88}
There is a set $\bm{S}$ of roots $\alpha_i$ of  $F(x,1)$ with  $\left|\bm{S}\right|\leq 12s - 3$ such that 
for any real  $\zeta$,
$$
\min_{\alpha_\ell\in S}|\zeta-\alpha_\ell| \leq R \min_{1\leq i\leq n}|\zeta-\alpha_i|.
$$
\end{lemma}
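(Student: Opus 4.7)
The plan is to cover $\mathbb{R}$ by a small family of open subintervals on each of which $F(z,1)F_x(z,1)$ is nonvanishing, extract at most one distinguished root per nonempty subinterval via Lemma~\ref{lemma 6}, and adjoin all real roots of $F$ to the resulting set.

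First I use that $F\in C(4s-2)$, which by the remark preceding Lemma~\ref{Ct} means that $F(z,1)$ has at most $4s-1$ real zeros and $F_x(z,1)$ has at most $4s-2$ real zeros; since $F$ is irreducible (so has no multiple root) these two zero sets are disjoint. Letting $\rho_1<\cdots<\rho_r$ denote their union, we have $r\leq 8s-3$, and the complement $\mathbb{R}\setminus\{\rho_1,\ldots,\rho_r\}$ decomposes into at most $8s-2$ open intervals $I_1,\ldots,I_{r+1}$ (including the two unbounded ends), on each of which $F\cdot F_x$ is nowhere zero.

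For each $k$ set $\mathcal{G}_k=\{\alpha_i:\Re(\alpha_i)\in I_k\}$. When $|\mathcal{G}_k|\geq 2$, Lemma~\ref{lemma 6} applied to $f(z)=F(z,1)$ on $I_k$ produces a distinguished root $\gamma_{\ell_k}\in\mathcal{G}_k$ with
\[
|\zeta-\gamma_{\ell_k}|<R\min_{\alpha\in\mathcal{G}_k}|\zeta-\alpha|
\]
for every real $\zeta$; when $|\mathcal{G}_k|=1$ the sole element works trivially, and when $|\mathcal{G}_k|=0$ no representative is needed. Define
\[
\bm{S}=\{\alpha_i:\alpha_i\in\mathbb{R}\}\cup\{\gamma_{\ell_k}:\mathcal{G}_k\neq\emptyset\},
\]
so that $|\bm{S}|\leq(4s-1)+(8s-2)=12s-3$.

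To verify the approximation property, fix any real $\zeta$ and pick $\alpha_{i_0}$ achieving $\min_i|\zeta-\alpha_i|$. If $\alpha_{i_0}$ is real it already belongs to $\bm{S}$ and we are done. If $\alpha_{i_0}$ is complex with $\Re(\alpha_{i_0})\in I_k$ for some $k$, then $\alpha_{i_0}\in\mathcal{G}_k$ and $\gamma_{\ell_k}\in\bm{S}$ gives $|\zeta-\gamma_{\ell_k}|<R|\zeta-\alpha_{i_0}|$. The remaining case, when $\Re(\alpha_{i_0})$ coincides with one of the subdivision points $\rho_j$, is the main technical point I expect to have to handle carefully: if $\rho_j$ is a real root of $F$ then $\rho_j\in\bm{S}$ and $|\zeta-\rho_j|\leq|\zeta-\alpha_{i_0}|$ is immediate (since $\rho_j=\Re(\alpha_{i_0})$ and $\alpha_{i_0}$ has nonzero imaginary part); if $\rho_j$ is only a real zero of $F_x$, one must argue by an infinitesimal perturbation that moves $\Re(\alpha_{i_0})$ into the interior of an adjoining $I_k$, and invoke stability of Lemma~\ref{lemma 6}'s estimate under such a shift, exploiting the fact that the factor $R=n^{800\log^2 n}$ is enormous compared to the geometric defect that the perturbation introduces.
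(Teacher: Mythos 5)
You take essentially the same route as the paper: partition $\mathbb{R}$ by the real zeros of $F(z,1)F_x(z,1)$ into $\leq 8s-2$ open intervals, invoke Lemma~\ref{lemma 6} on each interval that contains real parts of roots, and adjoin the $\leq 4s-1$ real roots of $F(z,1)$, giving $|\bm S|\leq 12s-3$ exactly as in the text. (Your remark that the two zero sets are disjoint by irreducibility is correct but unnecessary for the count, since one bounds $r$ by the sum of the two cardinalities in any case.)

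Where you go further is in attempting to verify the approximation inequality, which the paper simply asserts after constructing $\bm S$. The boundary case you isolate is a genuine subtlety: Lemma~\ref{lemma 6} only governs roots whose real part lies in the \emph{open} interval $I$, so a nonreal root $\alpha_{i_0}$ with $\Re(\alpha_{i_0})=\rho_j$ is not covered by any $\mathcal{G}_k$. Your first subcase, $\rho_j$ a real root of $F$, is handled correctly, and in fact that subcase is vacuous: if $\rho_j$ were a root of $F$ then $|\zeta-\rho_j|^2=(\zeta-\rho_j)^2<(\zeta-\rho_j)^2+\Im(\alpha_{i_0})^2=|\zeta-\alpha_{i_0}|^2$, so $\alpha_{i_0}$ would not minimize and the minimizing root would already be a real root in $\bm S$.

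The second subcase, $\rho_j$ a real zero of $F_x$ but not of $F$, is where your write-up stops short, and the sketch you offer does not close it. You cannot ``move $\Re(\alpha_{i_0})$'' by an infinitesimal perturbation: the roots of the fixed form $F$ are not free parameters, so one would have to perturb $F$ itself, at which point the interval structure and the distinguished roots $\gamma_{\ell_k}$ of Lemma~\ref{lemma 6} can change discontinuously and there is no stability statement to invoke. Perturbing $\zeta$ instead also fails, since the set of real $\zeta$ whose nearest root is this particular $\alpha_{i_0}$ can have nonempty interior. A real fix would require either strengthening Lemma~\ref{lemma 6} to tolerate a zero of $f'$ at an endpoint of $I$, or enlarging $\bm S$ by one extra root per such exceptional $\rho_j$ (which would inflate the $12s-3$ bound). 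To be fair, the paper's own reproduction of Mueller--Schmidt's argument is silent on this exact point, so your concern reflects the source as well; but as written your proof leaves the second subcase open, and the perturbation heuristic is not a substitute for an argument.
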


\begin{proof} Let $u=4s-2$ and $f(x)=F(x,1)$. Since $F(x,y)\in C(u)$, $f(x)$ has $\leq u+1$ real zeros and 
its derivative $f'(x)$ has $\leq u$ real zeros, 
so that $f(x)f'(x)$  has $\leq 2u+1$ real zeros. Thus the real numbers $x$ with $f(x)f'(x)\neq 0$ fall into
$\leq 2u+2$ intervals (or half lines) $I$. Let $\bm S$ consist on the one hand of the real zeros of $f(x)$,
and on the other hand, for each interval $I$ as above for which there are roots of $f(x)$ with real
part in $I$, pick a $\gamma_\ell$ according to Lemma \ref{lemma 6}. The set $\bm S$ so attained will have
$|\bm S|\leq u+1 + 2u+2 = 3u+3=12s-3$.
\end{proof}

Let $\bm S=\{\alpha_1,\ldots,\alpha_{t}\}$, with $t\leq 12s-3$.
\\

\textbf{Definition of the set $\frak{X}_0$}.
Let 
 $$ 
 \frak{X}_0 = \left\{(x^{(i)},y^{(i)})\in\frak{X}_i\backslash\left\{\mathfrak{X}_1\cup\ldots\cup\mathfrak{X}_{t}\cup\mathfrak{X}\right\}\right\}_{t<i\leq n}.
 $$

Let $(x^{(i)},y^{(i)})\in \frak{X}_0$. By Lemma \ref{lmMS88} there exists $\ell\in\left\{1,\ldots,t\right\}$ such that
$$
\left|L_i(x^{(i)},y^{(i)})\right|\geq \dfrac{\left|L_\ell(x^{(i)},y^{(i)})\right|}{R} 
\geq \dfrac{1}{2y^{(i)}R},
$$
where the last inequality is because $(x^{(i)},y^{(i)}) \not \in \mathfrak{X}_{\ell}$.  Combining this with \eqref{S58}, we obtain
$$
\left|d(x^{(i)},y^{(i)})-\beta_{i}(x^{(i)},y^{(i)})\right| \leq\dfrac{2|L_i(x^\ast,y^\ast)|y^{(i)}R +2}{|\mathcal{D}(\bm x^{(i)},\bm x^\ast)|}+\dfrac{1}{2}.
$$
Using \eqref{32}, we obtain
\begin{equation}\label{extrapoints0}
\max\left(1,\left|d(x^{(i)},y^{(i)})-\beta_{i}(x^{(i)},y^{(i)})\right|\right) \leq 3R+\dfrac{5}{2}.
\end{equation}

Note that a solution $(x^{(j)},y^{(j)})$ belongs to $\frak{X}_i$ if and only if $(x^{(j)},y^{(j)})=(x^{(i)},y^{(i)})$ or $(x^{(j)},y^{(j)})\in\frak{X}$, and
in this case $(x^{(j)},y^{(j)})\not\in\mathfrak{X}_0$.
Hence,  by \eqref{extrapoints0} and Lemma \ref{Sl6},
\begin{equation}\label{extrapoints}
\prod_{(x,y)\in\mathfrak{X}_0} \dfrac{2}{11} \max\left(1,\left|d(x,y)-\beta_{i}(x,y)\right|\right) \leq \dfrac{6R+5}{11}
\end{equation}
for $i=1,\ldots,n$.
Taking the product  of \eqref{E3} and \eqref{extrapoints} for $i=1,\ldots,n$  we find
\begin{align*}
\left(\dfrac{M}{(\frac{11}{2})^{n}m}\right)^{|\frak{X}\cup\mathfrak{X}_0|}\leq \left(Y\dfrac{6R+5}{11}\right)^n.
\end{align*}
Therefore,
\begin{equation}\label{takinglog}
|\mathfrak{X}\cup\mathfrak{X}_0| < \dfrac{n\log Y + n\log(6R+5)}{\log (M/(6^n m))}.
\end{equation}

The primitive solutions $(x,y)$ of \eqref{P} with $1\leq y\leq Y$ are in  
$$
\mathfrak{X}\cup\mathfrak{X}_0\cup\{(x^{(i)},y^{(i)})\}_{1\leq i\leq t}\cup\{(x^\ast,y^\ast)\},
$$
so there are $\ll |\mathfrak{X}\cup\mathfrak{X}_0| + s$ of them.

The number of primitive solutions of \eqref{P} with $1\leq x\leq Y$ can be estimated in a similar way, by considering the form 
$$
F(x,y)=a_0(y-\gamma_1 x)\cdot\ldots\cdot(y-\gamma_n x)
$$
and putting $L_i(x,y)=y-\gamma_i x$. Here $\gamma_1,\ldots,\gamma_n$ are the roots of the polynomial $F(1,y)$.


\section{Bound for $\tilde P(F,m)$ for large discriminants}\label{sP2}


Let $F(x,y)\in\mathbb{Z}[x,y]$ be an irreducible binary form of degree $n\geq 3$ that lies in $C(4s-2)$ such that
\begin{equation}\label{D0}
|D(F)|>(n(n-1))^{8n(n-1)}.
\end{equation}
Further, assume that $F$ has the smallest Mahler measure among all its equivalent forms. We will say for abreviation that $F$ has minimal Mahler measure.
Later we need this assumption in order to use simultaneously section \ref{ssmall} and Lemma \ref{Ste}. 
In Lemma \ref{Ste}, the Mahler measure of 
$F$ is involved, whereas in section \ref{ssmall} we work with the smallest Mahler measure among all forms equivalent to $F$. 
We need both measures to be the same.
Let $M=M(F)$ be the minimal Mahler measure.
We first count the number of solutions to \eqref{P} such that
\begin{equation}\label{m0}
 m\leq \dfrac{|D(F)|^{\frac{1}{2(n-1)}}}{e^{200n}}.
\end{equation} 
Note that, if $m$ satisfies \eqref{m0}, then by  \eqref{mahD5} we also have
\begin{equation}\label{m1}
m\leq \dfrac{M}{e^{200n}}.
\end{equation}

Let
\begin{equation}\label{YS2}
Y_0=(M/m)^5.
\end{equation}

 Relative to the quantity $Y_0$,  we call a 
solution $(x , y) \in \mathbb{Z}^2$ 
\begin{eqnarray*}
\textrm{small} \, \, &\textrm{if}& \, \, 0 \leq y\leq Y_0,  \\
\textrm{large} \, \, &\textrm{if}& \, \, y>Y_0.
\end{eqnarray*}

\begin{prop}\label{p2small}
Let $F\in C(4s-2)$ and $m$ be a positive integer satisfying \eqref{m0}. Assume \eqref{D0} and that $F$ has minimal Mahler measure.
The inequality \eqref{P} has $\ll n$ small solutions.
\end{prop}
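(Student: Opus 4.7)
The plan is to invoke the general small-solutions machinery of Section~\ref{ssmall} with the specific choice $Y = Y_0 = (M/m)^5$ and to show that the resulting count is $O(n)$. First I need to verify the hypothesis~\eqref{m2} that $m \leq M/100^n$; this follows from~\eqref{m1} (itself deduced from~\eqref{m0} via Mahler's inequality~\eqref{mahD5}) together with the trivial estimate $100^n < e^{200n}$. Both remaining hypotheses of Section~\ref{ssmall}, namely $F \in C(4s-2)$ and $M$ being the smallest Mahler measure on the equivalence class of $F$, are assumed in the proposition.

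Substituting $Y = Y_0$ into the master bound~\eqref{takinglog} and using $\log Y_0 = 5\log(M/m)$ yields
\[
|\mathfrak{X}\cup\mathfrak{X}_0| \;<\; \frac{5n\log(M/m) + n\log(6R+5)}{\log(M/m) - n\log 6}.
\]
By~\eqref{m1} we have $\log(M/m) \geq 200 n$, so the denominator exceeds $198\,n$ and is simultaneously at least $0.99\log(M/m)$. The first summand in the numerator therefore contributes $O(n)$. For the second, $R = n^{800\log^2 n}$ gives $\log(6R+5) \ll \log^3 n$, and dividing by $198 n$ produces at most $O(\log^3 n)$; since the function $(\log t)^3/t$ is bounded on $[3,\infty)$ this is also $O(n)$. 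Hence $|\mathfrak{X}\cup\mathfrak{X}_0| \ll n$.

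According to Section~\ref{ssmall} the primitive solutions of~\eqref{P} with $1 \leq y \leq Y_0$ all lie in
\[
\mathfrak{X}\cup\mathfrak{X}_0\cup\{(x^{(i)},y^{(i)})\}_{1\leq i\leq t}\cup\{(x^\ast,y^\ast)\},
\]
so there are $\ll n + s \ll n$ of them, using $t \leq 12s-3$ together with $s \leq n$ for a form of degree $n$ with $s+1$ nonzero coefficients. Solutions with $y = 0$ contribute at most one further primitive pair $(\pm 1, 0)$, completing the count of small primitive solutions.

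I do not foresee a serious obstacle; the argument is essentially bookkeeping within the framework already assembled in Section~\ref{ssmall}. The only mild subtlety is the choice of $Y_0$: its exponent $5$ must be large enough to be exploited in the complementary Proposition~\ref{p2large} via a Lewis--Mahler type estimate, yet small enough that the numerator $5n\log(M/m)$ does not overwhelm the denominator $\log(M/m) - n\log 6$, which would blow the ratio up past $O(n)$. This balance is what makes the large-discriminant assumption~\eqref{D0} useful: it forces $\log(M/m)$ to be much larger than both $n \log 6$ and $\log R$.
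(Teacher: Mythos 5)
Your proposal is correct and follows essentially the same route as the paper: apply the master bound~\eqref{takinglog} with $Y = Y_0 = (M/m)^5$, use $\log(M/(6^n m)) \gg n$ (a consequence of~\eqref{m0} via~\eqref{mahD5}) to control the denominator, note $n\log(6R+5) \ll n\log^3 n \ll n^2$, deduce $|\mathfrak{X}\cup\mathfrak{X}_0| \ll n$, and add the $\leq t+1$ remaining solutions. The only cosmetic difference is that you verify the $198n$ and $0.99\log(M/m)$ lower bounds on the denominator explicitly, whereas the paper compresses this to $\log(M/(6^nm)) \gg n$.
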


\begin{prop}\label{p2large}
Let $F\in C(4s-2)$ and $m$ be a positive integer satisfying \eqref{m0}. Assume \eqref{D0} and that $F$ has minimal Mahler measure.
The inequality \eqref{P}  has $\ll s\log \log m^{1/n}$ large solutions.
\end{prop}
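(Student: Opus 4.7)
The plan is to combine the Lewis--Mahler inequality with Lemma~\ref{lmMS88} and a standard gap principle. The Lewis--Mahler inequality forces every large primitive solution to be a sharp rational approximation to one of the roots of $F(z,1)$; Lemma~\ref{lmMS88} collapses these $n$ roots down to only $O(s)$ essential representatives; and within each class of solutions associated with a fixed representative, a Roth-style gap principle yields doubly exponential spacing, hence a doubly logarithmic count.

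For each large primitive solution $(x,y)$ with $y > Y_0 = (M/m)^5$, I would invoke the Lewis--Mahler inequality (Lemma~\ref{Ste}) to produce a root $\alpha_i$ of $F(z,1)$ with
$$\left|\frac{x}{y}-\alpha_i\right|\ll \frac{m}{y^n\,|D(F)|^{1/(2(n-1))}}.$$
Since $y>Y_0$ is large, the right-hand side is extremely small compared with the minimal root separation of $F$ (controlled via \eqref{mahD5}), so $\alpha_i$ is uniquely determined. Lemma~\ref{lmMS88} then supplies a set $\bm S$ of at most $12s-3$ roots that approximates the nearest-root function up to a factor $R=n^{800\log^2 n}$; replacing $\alpha_i$ by some $\alpha_\ell\in\bm S$ worsens the bound only by the factor $R$ and partitions the large solutions into at most $12s-3$ classes indexed by $\bm S$.

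Within one such class the gap argument runs as follows. For distinct primitive solutions $(x_1,y_1)$ and $(x_2,y_2)$ sharing the representative $\alpha_\ell$, with $y_1\leq y_2$, the integrality bound $|x_1 y_2 - x_2 y_1|\geq 1$ combined with the two approximation inequalities yields
$$y_2 \gg \frac{y_1^{\,n-1}\,|D(F)|^{1/(2(n-1))}}{R\,m}.$$
Hypothesis \eqref{D0} is calibrated precisely so that $|D(F)|^{1/(2(n-1))}$ dominates $R$ superpolynomially; together with $m \leq M/e^{200n}$ from \eqref{m0} and $y_1 \geq Y_0 = (M/m)^5$, the displayed inequality upgrades to a genuine power-type gap $y_2 \geq y_1^{1+\delta}$ for some $\delta>0$. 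Iterating produces doubly exponential growth $y_k \geq Y_0^{(1+\delta)^k}$, so the number of large solutions per class up to a cutoff $y \leq Y_{\max}$ is $\ll \log\log(Y_{\max}/Y_0)$. A standard polynomial bound $Y_{\max} \ll m^{O(1)}$ on $y$ (furnished by Schmidt's gap theorem \cite{Sbook} together with $|F(x,y)|\leq m$) then gives $\ll \log\log m^{1/n}$ solutions per class, and summing over the $\ll s$ classes yields the claimed bound.

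The main obstacle is the huge factor $R = n^{800\log^2 n}$ introduced by Lemma~\ref{lmMS88}: without a strong hypothesis on $|D(F)|$, this factor would destroy the power-type gap and reduce the doubly logarithmic count to a singly logarithmic one. It is precisely hypothesis \eqref{D0} that absorbs $R$ and keeps the gap exponent strictly greater than $1$. A secondary subtlety is that the argument must be carried out for $F\in C(4s-2)$ rather than for forms with exactly $s+1$ nonzero monomials; this is valid because Lemma~\ref{lmMS88} is already stated in the wider class.
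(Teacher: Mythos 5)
Your overall shape — Lewis--Mahler (Lemma~\ref{Ste}) plus Lemma~\ref{lmMS88} to collapse to $\ll s$ root classes, then a gap principle within each class — is the same as the paper's, and the power-type gap $y_{i+1}\gg y_i^{\,n-1}\cdot(\cdots)$ you derive is sound. The problem is the endgame.

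You close the argument by asserting ``a standard polynomial bound $Y_{\max}\ll m^{O(1)}$ on $y$ (furnished by Schmidt's gap theorem \cite{Sbook} together with $|F(x,y)|\leq m$)''. No such bound exists. Solutions of a Thue inequality $|F(x,y)|\leq m$ are not bounded polynomially in $m$ by any effective argument: Baker-type bounds are enormously larger, and Roth-type bounds are ineffective and, crucially, carry a constant depending on $F$, which is exactly what this proposition must avoid. A gap principle alone only shows solutions are sparse below any fixed cutoff; it says nothing about where the sequence terminates. What Schmidt's Theorem~9A (Chapter~2 of \cite{Sbook}) actually provides is a \emph{count}, not a size bound: the number of primitive $(x,y)$ with $|\alpha_j - x/y| < y^{-3\sqrt{n}/2}$ is $\ll 1 + \log\log h(\alpha_j)/\log n$, where $h(\alpha_j)$ is the absolute height of the root, independent of $m$. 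The paper first manipulates \eqref{gap0}--\eqref{gap} to verify the exponent $3\sqrt{n}/2$ hypothesis (using $y>Y_0=(M/m)^5$ and the gap $y_i>y_{i-1}^{(4n-3)/5}$), then applies this counting theorem, and only then relates $h(\alpha_j)$ to $m$: either $(M/m)^5 \geq h(\alpha_j)$, in which case every large solution has $y_i\geq h(\alpha_j)$ and the count is $\ll 1$, or else $h(\alpha_j) > (M/m)^5 \gg (h(\alpha_j)^n/m)^5$, forcing $\log\log h(\alpha_j)\ll \log\log m^{1/n}$. This case distinction, and the use of the counting version of Roth's theorem rather than any $Y_{\max}$ bound, is the missing ingredient in your proposal; as written, the step from ``doubly exponential spacing below $Y_{\max}$'' to ``$\ll \log\log m^{1/n}$ solutions'' does not go through.

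One smaller remark: hypothesis~\eqref{D0} is not the device that absorbs $R$ here — it is \eqref{m0}, i.e.\ $m\leq |D(F)|^{1/(2(n-1))}/e^{200n}$, together with $e^{200n(n-1)}>R\,2^n n^{(n-1)/2}$, that lets the paper pass from the Lewis--Mahler bound to $|\alpha_j - x/y| < (M/m)^{n-2}/(2y^n)$, and \eqref{D0} is what guarantees \eqref{m0} can be satisfied for a nontrivial range of $m$.
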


Hence the number of solutions of \eqref{P} with $F$ having minimal Mahler measure, with \eqref{D0}, and $m$ satisfying \eqref{m0} is
\begin{equation}\label{00}
\ll n + s\log\log m^{1/n}.
\end{equation}

Now we use the argument in \cite[section II]{Bom} to derive an upper bound for $\tilde P(F,m)$ for any positive integer $m$
and with no need that $F$ has minimal Mahler measure.
Pick the smallest prime $p$ that satisfies
\begin{equation}\label{p}
p \geq e^{400} m^{\frac{2}{n}} |D(F)|^{-\frac{1}{n(n-1)}}.
\end{equation}
Note that 
\begin{equation}\label{2p}
 p< 2e^{400} m^{\frac{2}{n}} |D(F)|^{-\frac{1}{n(n-1)}}.
\end{equation}

Let
$$
A_0=\begin{pmatrix} 1 &0\\0 &p\end{pmatrix},\qquad
A_j=\begin{pmatrix} p &j\\0 &1\end{pmatrix}\qquad (j=1,\ldots,p).
$$
We have that $F_{A_j}\in C(4s-2)$ for $j=0,\ldots,p$. 
We also have $\mathbb{Z}^2=\cup_{j=0}^p A_j\mathbb{Z}^2$, so that
\begin{equation}\label{Pineq1}
\tilde P(F,m)\leq \sum_{j=0}^p \tilde P(F_{A_j},m).
\end{equation}
Let $F'_{A_j}$ be a form equivalent to $F_{A_j}$ that has minimal Mahler measure. 

For $j=0,\ldots,p$, by \eqref{St6} and \eqref{p}, 
$$
|D(F'_{A_j})|=p^{n(n-1)}|D(F)|\geq e^{400n(n-1)} m^{2n-2},
$$
so
$m\leq |D(F'_{A_j})|^{\frac{1}{2(n-1)}}/e^{200n}$, and
 $|D(F'_{A_j})|$ satisfies \eqref{D0},
so we can apply Propositions \ref{p2small} and \ref{p2large} to $F'_{A_j}$. Hence 
$$
\tilde P(F_{A_j},m)=\tilde P(F'_{A_j},m)\ll n + s\log\log m^{1/n}.
$$
By \eqref{Pineq1} and \eqref{2p}, we obtain that 
\begin{align}\label{tP2}
\tilde P(F,m) &\ll (p+1)(n + s\log\log m^{\frac{1}{n}})\nonumber\\
 &\ll (n + s\log\log m^{\frac{1}{n}}) m^{\frac{2}{n}} |D(F)|^{-\frac{1}{n(n-1)}}.
\end{align}

If $|D(F)|^{\frac{1}{n(n-1)}} > \log \log m^{\frac{1}{n}}$,  \eqref{b2} follows from \eqref{tP2} and the assumption \eqref{D0}.

If $|D(F)|^{\frac{1}{n(n-1)}} \leq \log \log m^{\frac{1}{n}}$, then by \eqref{D0} we also have that $\log\log m\geq (n(n-1))^8$, and the  
 result below, which is part of  the Corollary of Theorem 2 in \cite{Thu} (with $\varepsilon=1/2$) concludes the proof of \eqref{b2}.
\begin{prop}\label{Thun}
Let $F(x,y)\in\mathbb{Z}[x,y]$ be an irreducible  binary form with  $s+1$ nonzero coefficients and degree $n\geq 3$. 
Let $m$ be a positive integer.
 If $|D(F)|^{1/n(n-1)}\leq \log\log m$ and $\log\log m\geq (n(n-1))^8$, then $|F(x,y)|\leq m$ 
  has $ \ll m^{2/n}$ solutions.
\end{prop}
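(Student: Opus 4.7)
The plan is to adapt Thunder's argument from \cite{Thu}, whose main innovation removes the logarithmic factor in Schmidt's bound \eqref{Sbound} when $m$ is sufficiently large compared to $|D(F)|$. I would first apply a $GL_2(\mathbb{Z})$ transformation to replace $F$ by an equivalent form of minimal Mahler measure; this does not change the solution count, and by \eqref{mahD5} yields $M(F) \gg (|D(F)|/n^n)^{1/(2n-2)}$. The standing hypothesis $\log\log m \geq (n(n-1))^8$ makes $m$ astronomically large compared to both $n$ and $|D(F)|^{1/n(n-1)}\leq \log\log m^{1/n}$, so the target quantity $m^{2/n}$ will dominate any polynomial expression in $n$ or $\log\log m$.

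Next I would partition the primitive solutions of $|F(x,y)|\leq m$ into small ones with $1\leq y\leq Y$ (for some $Y$ of order $m^{1/n}$) and large ones with $y>Y$. The small solutions lie in a bounded region, and since the leading coefficient satisfies $|a_n|\geq 1$, a standard lattice-point count in the region $\{|F(x,y)|\leq m\}\cap\{|y|\leq Y\}$ gives $\ll m^{2/n}$. For the large solutions I would apply the Lewis-Mahler inequality (Lemma \ref{Ste}): each primitive solution produces a rational $x/y$ approximating some real root $\alpha_i$ of $F(x,1)$ to within $\ll m/(|y|^n|F'(\alpha_i,1)|)$. A Schmidt-style gap principle in the spirit of Section \ref{ssmall} and Proposition \ref{p2large} then forces an exponentially spaced sequence of admissible $y$-values per root, yielding $\ll \log\log m^{1/n}$ large solutions per root. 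Summing over roots and using Lemma \ref{lmMS88} (which applies by Lemma \ref{Ct} since $F\in C(4s-2)$) to collapse to an effective count of $\ll s$ relevant roots, the total large-solution count is $\ll s\log\log m^{1/n}$.

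The main obstacle is showing that the residual count $s\log\log m^{1/n}$ is absorbed by $m^{2/n}$. This follows from the strong hypothesis $\log\log m \geq (n(n-1))^8$: since $s\leq n$ and hence $n\leq (\log\log m)^{1/16}$, we have $s\log\log m^{1/n}\leq (\log\log m)^{17/16}$, which is negligible compared to $m^{2/n}$ at this magnitude of $m$. A secondary technical point is that the condition $|D(F)|^{1/n(n-1)}\leq \log\log m^{1/n}$ is exactly what is required to control the constants arising in the Lewis-Mahler inequality throughout the gap-principle iteration, and one must verify that the geometric count of small solutions does not inherit undesirable dependence on $F$ — which is handled by the minimality of the Mahler measure together with standard area estimates for the sublevel set $\{|F(x,y)|\leq m\}$.
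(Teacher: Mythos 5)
The paper does not prove Proposition~\ref{Thun} at all: it is quoted as ``part of the Corollary of Theorem~2 in [Thunder]'' (the citation in the text reads \texttt{\textbackslash cite\{Thu\}} but from context the intended reference is Thunder's paper \emph{On Thue inequalities and a conjecture of Schmidt}, the entry \texttt{\textbackslash cite\{Thun\}} in the bibliography). So there is no argument in the paper to compare against; you are attempting to reprove an external black-box result. That is a legitimate thing to try, but your sketch, as written, has a genuine gap precisely at the step that constitutes the real content of Thunder's theorem.

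The gap is the treatment of small solutions. You claim that for $Y\asymp m^{1/n}$ a ``standard lattice-point count in the region $\{|F(x,y)|\leq m\}\cap\{|y|\leq Y\}$ gives $\ll m^{2/n}$.'' This is not standard and is not true without further input. For a fixed $y$, the set of integers $x$ with $|F(x,y)|\leq m$ is a union of up to $n$ intervals centred near $\alpha_i y$, and the width of the interval near a given root is roughly $m/\bigl(|a_n|\,y^{n-1}\prod_{j\neq i}|\alpha_i-\alpha_j|\bigr)$; without a lower bound on the individual products $\prod_{j\neq i}|\alpha_i-\alpha_j|$ (not just on their overall product $|D(F)|^{1/2}/|a_n|^{n-1}$), this width can be large, and summing over $y\leq Y$ can exceed $m^{2/n}$ by a factor depending on the form. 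Controlling these ``thin strip'' contributions is exactly what requires the Bombieri--Schmidt $\mathfrak{X}_i$-apparatus (the paper's Section~\ref{ssmall} and Proposition~\ref{p2small}), or the archimedean-measure argument of Mueller--Schmidt, or Thunder's refinement thereof; it cannot be replaced by an area estimate plus the remark that $|a_n|\geq 1$. Note also that the paper's own small-solution machinery operates only under $m\leq M/100^n$ (see~\eqref{m2} and~\eqref{m0}), the opposite of the regime relevant to Thunder's theorem, where $\log\log m\geq (n(n-1))^8$ forces $m$ to be enormous compared to any power of $M$; so that machinery is not available to you here either.

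A secondary issue: the gap principle you invoke for large solutions, in the form the paper actually uses (Proposition~\ref{p2large}, via~\eqref{gap0}--\eqref{gap1}), depends on $|D(F)|^{1/2}\geq 2^n n^{(n-1)/2} m^{n-1}$, which follows from~\eqref{m0}. In the present regime $|D(F)|^{1/(n(n-1))}\leq\log\log m$ this fails badly, so the threshold $Y$ and the resulting gap estimate would have to be reworked from scratch. Your remark that the hypothesis on $|D(F)|$ ``controls the constants in the Lewis--Mahler inequality'' gestures at this but does not supply the needed estimate. In short: the structure of your sketch is plausible and in the same spirit as the machinery elsewhere in the paper, but the small-solution bound and the adapted gap principle are exactly the hard parts, and at present they are asserted rather than proved.
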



\section{Proof of Proposition \ref{p2small}}\label{ssmall2}

We apply \eqref{takinglog} to $Y=Y_0$ defined by \eqref{YS2}. We have,  by \eqref{m0} and \eqref{R},
\begin{align*}
n\log Y_0 + n\log (6R+5) &\ll  n\left(\log \frac{M}{6^nm} + \log 6^n \right) + n\log^3n \\
&\ll   n\log \frac{M}{6^n m} + n^2 
\end{align*}
and
$$
\log (M/(6^nm)) \gg n.
$$
Hence $|\mathfrak{X}\cup\mathfrak{X}_0| \ll  n$ and
we conclude Proposition \ref{p2small}.

\section{Large solutions, proof of Proposition \ref{p2large}}\label{slarge}

Let $F(x,y)=\sum_{i=0}^n a_ix^iy^{n-i}\in\Z[x,y]$ be an irreducible binary form that lies in $C(4s-2)$ and satisfies \eqref{D0}. 
Let $m$ be a positive integer that satisfies 
\eqref{m0}.

The following lemma is a version of the Lewis-Mahler inequality \cite{LM}, refined by
Bombieri and Schmidt \cite[Lemma 4]{Bom} and later written by Stewart \cite[Lemma 3]{Ste} in
terms of the discriminant of $F$ instead of the height. 

\begin{lemma}\label{Ste}
 For every pair of integers $(x, y)$ with $y\neq 0$,
$$
\min_{1\leq i\leq n} \left|\alpha_i-\dfrac{x}{y}\right| \leq \dfrac{2^{n-1} n^{(n-1)/2} M(F)^{n-2} |F(x,y)|}{|D(F)|^{1/2} y^n}.
$$
 \end{lemma}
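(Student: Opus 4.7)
The approach is the classical Bombieri--Schmidt strategy: isolate the nearest root to $x/y$, and then convert a product of root-differences into the discriminant by means of Mahler's upper bound.

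First I would factor $F(x,y)=a_n y^n\prod_{i=1}^n(x/y-\alpha_i)$ and pick the index $k$ achieving the minimum $|\alpha_k-x/y|$. For every $i\neq k$ the triangle inequality yields $|\alpha_i-\alpha_k|\leq 2|\alpha_i-x/y|$, so keeping the nearest factor and bounding the remaining $n-1$ from below gives
$$\left|\alpha_k-\tfrac{x}{y}\right|\leq\frac{2^{n-1}|F(x,y)|}{|a_n|\,|y|^n\prod_{i\neq k}|\alpha_i-\alpha_k|}.$$
This reduces the lemma to producing a lower bound for $\prod_{i\neq k}|\alpha_i-\alpha_k|$ in terms of $|D(F)|^{1/2}$ and $M(F)^{n-2}$.

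Next I would use the identity
$$|D(F)|^{1/2}=|a_n|^{n-1}\prod_{i<j}|\alpha_i-\alpha_j|=|a_n|^{n-1}\prod_{i\neq k}|\alpha_i-\alpha_k|\cdot\prod_{\substack{i<j\\ i,j\neq k}}|\alpha_i-\alpha_j|,$$
so the desired lower bound is equivalent to an upper bound on the rightmost product. That product is (up to a power of $|a_n|$) the square root of the discriminant of the degree $(n-1)$ polynomial $F(x,1)/(x-\alpha_k)$, whose Mahler measure is $M(F)/\max(1,|\alpha_k|)\leq M(F)$. Applying the classical Mahler bound $|\mathrm{disc}(G)|\leq d^{d}M(G)^{2d-2}$ with $d=n-1$ then yields
$$\prod_{\substack{i<j\\ i,j\neq k}}|\alpha_i-\alpha_j|\leq\frac{(n-1)^{(n-1)/2}M(F)^{n-2}}{|a_n|^{n-2}},$$
and substituting back produces the stated inequality, in fact with $(n-1)^{(n-1)/2}$ in place of the $n^{(n-1)/2}$ recorded in the lemma.

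The only mildly delicate point is invoking Mahler's discriminant inequality for the polynomial $F(x,1)/(x-\alpha_k)$, whose coefficients are algebraic rather than rational; the bound however holds for any complex-coefficient polynomial in terms of its Mahler measure, so there is no genuine obstacle. Every other step is a one-line manipulation, and no extra input beyond the factorization of $F$, the triangle inequality, and the discriminant/Mahler measure identities in Section \ref{pre} is needed.
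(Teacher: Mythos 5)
Your proof is correct and follows the same route as the cited Stewart/Bombieri--Schmidt argument: isolate the nearest root by the triangle inequality, and control the remaining root-difference product $\prod_{i\neq k}|\alpha_i-\alpha_k|$ by splitting off $|D(F)|^{1/2}$ and bounding the extraneous factor via Mahler's discriminant inequality applied to the deflated polynomial $F(x,1)/(x-\alpha_k)$. You correctly note that Mahler's bound is Hadamard's inequality on the Vandermonde matrix of the roots and so applies to the algebraic-coefficient deflation with $M(G)\leq M(F)$, and the derivation does indeed give the slightly sharper constant $(n-1)^{(n-1)/2}$ in place of the $n^{(n-1)/2}$ quoted from Stewart.
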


 Let $(x,y)$ be a solution to \eqref{P} with $y>Y_0$, and let
 $$
 \left|\alpha_j-\dfrac{x}{y}\right| = \min_{1\leq i\leq t} \left|\alpha_i-\dfrac{x}{y}\right|,
 $$
 where $t$ is the cardinal of the set $\bm S$ defined by Lemma \ref{lmMS88}.
 By Lemmas \ref{Ste} and \ref{lmMS88},
$$
\left|\alpha_j-\dfrac{x}{y}\right| \leq   \dfrac{R 2^{n-1} n^{(n-1)/2} M(F)^{n-2} m}{|D(F)|^{1/2} y^n},
$$
with $R$ defined in \eqref{R}. On noticing that $e^{200n(n-1)}> R2^n n^{(n-1)/2}$ and 
using \eqref{m0}, we have 
 \begin{align}
  \left|\alpha_j-\dfrac{x}{y}\right|   &< \dfrac{(M(F)/m)^{n-2}}{2y^n}\label{gap0}\\
  &\leq \dfrac{1}{2y^{n-(n-2)/5}}\label{gap1},
 \end{align}
 where in the last inequality we used \eqref{YS2} and $y>Y_0$.
 Let $(x_1,y_1), (x_2,y_2),\ldots$ be the primitive solutions to \eqref{P} with $y>Y_0$ and ordered so that
 $$
 Y_0 < y_1\leq y_2\leq \ldots.
 $$
For all $i>1$, by \eqref{gap1} we have
 $$
 \dfrac{1}{y_iy_{i-1}} \leq \left|\dfrac{x_i}{y_i}-\dfrac{x_{i-1}}{y_{i-1}}\right| <\dfrac{1}{y_{i-1}^{n-(n-2)/5}}.
 $$
 Thus 
 \begin{equation}\label{3/2}
 y_{i}> y_{i-1}^{n-1-(n-2)/5}=y_{i-1}^{(4n-3)/5} > (M/m)^{4n-3}.
 \end{equation}
 On noting that $n-3\sqrt{n}/2>3/10$ for $n\geq 3$ and $\frac{3}{10}(4n-3)>n-2$,
 by \eqref{3/2} we have 
 $$y_i^{n-3\sqrt{n}/2}> y_i^{3/10} > (M(F)/m)^{n-2}.
 $$
 Hence, by \eqref{gap0},
 \begin{equation}\label{gap}
   |\alpha_j-x_i/y_i| <y_i^{-3\sqrt{n}/2}
   \end{equation}
   for all $i>1$. By \cite[Theorem 9A, Chapter 2]{Sbook}, the number of solutions to \eqref{gap} is 
   $$\ll 1+\log\log h(\alpha_j)/\log n,$$
   where $h(\alpha_j)$ is the absolute height of $\alpha_j$ defined in \cite[\S 7 Chapter 1]{Sbook}. By \cite[Lemma 2A Chapter 3]{Sbook},
   $$
   h(\alpha_j)=\mathrm{cont}(F)^{-1} (|a_n|\prod_{k=1}^n \sqrt{1+|\alpha_k|^2})^{1/n}.
   $$
   Here $\mathrm{cont}(F)=\mathrm{gcd}(|a_0|,\ldots,|a_n|)$.
    If $y_i\geq h(\alpha_j)$, then the number of solutions is $\ll 1$. 
 
 Hence, if $(M(F)/m)^5\geq h(\alpha_j)$, the number of solutions is
$\ll 1$ and otherwise we have
$h(\alpha_j)\geq (M(F)/m)^5\gg (h(\alpha_j)^n/m)^5$, so $\log\log h(\alpha_j)\ll \log \log m^{1/n}$.

   Finally we conclude that the number of primitive large solutions is $\ll t(1+\log\log m^{1/n}) \ll s(1+\log\log m^{1/n})$.


\section{Bound for $\tilde P(F,m)$, proof of Theorem \ref{th1}}\label{sP1}


 \subsection{Definitions of small, medium and large solutions}

Let $\bm x = (x , y)\in\mathbb{Z}^2$. We define
$$
\xx = \max (|x|,|y|),\qquad
\x = \min(|x|,|y|).$$ 
 
Given $F\in C(4s-2)$,  we measure the size of possible solutions $(x , y)$ of \eqref{P} by the size of 
$\x$ and $\xx$.

 Relative to two quantities $Y_S$, $Y_L$, which will be defined below in \eqref{YS} and \eqref{YL}, we call a 
solution $(x , y) \in \mathbb{Z}^2$ 
\begin{eqnarray}\label{dsmall}
\textrm{small} \, \, &\textrm{if}& \, \, 0 \leq \x\leq Y_S,  \\
\textrm{medium} \, \, &\textrm{if}& \, \,  \xx\leq Y_L \ \textrm{and}\   \x> Y_S,\nonumber\\
\textrm{large} \, \, &\textrm{if}& \, \,  \xx >Y_L.\nonumber
\end{eqnarray}

We will split the count
 of possible solutions $(x , y)$ into \textit{small}, \textit{medium} and \textit{large} solutions. 
We choose the constants below to be consistent with Mueller and Schmidt's work \cite{MS88}. 
 Let $H(F)$ be the height of $F$ and let $M$ be the smallest Mahler measure among the forms equivalent to $F$.
 Put
\begin{equation}\label{defC}
C=Rm(2H(F)\sqrt{n(n+1)})^n,
\end{equation}
where $R$ is defined by \eqref{R}.
Pick numbers $a,b$ with $0<a<b<1$ so small that
\begin{equation}\label{ab}
\dfrac{\sqrt{2}\sqrt{3+a^2}}{1-b}<3.
\end{equation}
Put
$$
\lambda=\dfrac{\sqrt{2(n+a^2)}}{1-b},
$$
so that, by \eqref{ab},
$n-\lambda>0.$
Note that
\begin{equation}\label{lambda}
\lambda\asymp \sqrt{n}, \qquad n-\lambda\asymp n-\sqrt{n}.
\end{equation}
 We  define
\begin{equation}\label{defA}
A=\dfrac{1}{a^2}(\log M+\dfrac{n}{2}),
\end{equation}
and
\begin{equation}\label{YS}
Y_S=((e^6s)^nR^{2s}m)^{\frac{1}{n-2s}},
\end{equation}
\begin{equation}\label{YL}
 Y_L=(2C)^{1/(n-\lambda)}(4e^A)^{\lambda/(n-\lambda)}.
 \end{equation}
 
Our definitions of  $Y_S$ and $Y_L$ are the same as the quantities introduced by Mueller and Schmidt  in \cite[eq. 2.10 and 2.9]{MS88}  to 
distinguish between small and large solutions.
 With these definitions we have:
 
  \begin{prop}[Mueller-Schmidt]\label{pplarge} Let $F(x,y)\in\mathbb{Z}[x,y]$ be an irreducible binary form with $s+1$ nonzero coefficients 
  and degree $n\geq 3$.
  For any positive integer $m$, 
the number of primitive large solutions of $F(x,y)\leq m$  is $\ll s$.
\end{prop}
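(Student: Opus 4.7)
The plan is to follow the strategy of Mueller and Schmidt \cite{MS88}, exploiting the hypothesis that $F$ has $s+1$ nonzero coefficients only through Lemmas \ref{Ct} and \ref{lmMS88}. Since $F\in C(4s-2)$, Lemma \ref{lmMS88} provides a set $\bm{S}$ of at most $12s-3$ roots $\alpha_\ell$ of $F(x,1)$ such that, for every real $\zeta$,
$$
\min_{\alpha_\ell\in \bm S}|\zeta-\alpha_\ell|\leq R\,\min_{1\leq i\leq n}|\zeta-\alpha_i|.
$$
For each primitive large solution $(x,y)$ with $\xx>Y_L$, the Lewis--Mahler inequality (Lemma \ref{Ste}) shows that $x/y$ is very close to some root $\alpha_i$ of $F(x,1)$; Lemma \ref{lmMS88} then lets me assign to $(x,y)$ a root $\alpha_\ell\in \bm S$ such that $|\alpha_\ell-x/y|$ is still essentially of the same order, at the cost of a factor $R$. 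Thus every large solution is attached to one of the $\ll s$ roots in $\bm S$, and it suffices to bound by $O(1)$ the number of large primitive solutions attached to each fixed $\alpha_\ell\in \bm{S}$.

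Given $\alpha_\ell\in \bm S$, I would order the large primitive solutions approximating it as $(x_1,y_1),(x_2,y_2),\ldots$ with $\xx_k$ nondecreasing. A standard gap principle, using $\mathcal D(\bm x_k,\bm x_{k+1})\neq 0$, converts the approximations into the multiplicative inequality $\xx_{k+1}\gg \xx_k^{\lambda/(n-\lambda)\cdot \text{(something)}}$; with $\lambda\asymp\sqrt{n}$ as defined by \eqref{lambda}, consecutive large solutions grow at least polynomially in each other. The choice of $C$ in \eqref{defC} incorporates $R$, $m$, and the height factor $(2H(F)\sqrt{n(n+1)})^n$ so that $Y_L$ from \eqref{YL} is precisely the threshold beyond which the Thue--Siegel principle, applied in its quantitative form from \cite{MS88} (the auxiliary polynomial / theta-series construction involving the Mahler-measure factor $A=a^{-2}(\log M+n/2)$), forces only $O(1)$ further approximations.

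The main obstacle is precisely this last step: showing that for each $\alpha_\ell\in \bm S$ the number of large approximations is \emph{bounded}, not merely $O(\log\log m^{1/n})$ as in the proof of Proposition \ref{p2large}. This requires the full strength of the Thue--Siegel lemma of Mueller--Schmidt with the specific parameters $a,b,\lambda, A$, balancing $n-\lambda\asymp n-\sqrt n$ against $\lambda\asymp\sqrt n$ to obtain an exponent gap that admits only finitely many (absolutely bounded) solutions past $Y_L$. Once this is in place, summing over the $\ll s$ elements of $\bm S$ yields the claimed bound of $\ll s$ primitive large solutions.
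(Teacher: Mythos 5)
The paper offers no independent proof of this statement: it simply records that Proposition \ref{pplarge} is Mueller--Schmidt's Proposition 1 in \cite{MS88} and cites it. Your proposal is therefore not a competing proof but an attempted reconstruction of the cited external argument, and you are candid that it is incomplete.

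The outline you give correctly identifies the structural ingredients that Mueller and Schmidt use: reduce the relevant roots of $F(z,1)$ to a subset of size $\ll s$, pin each large primitive solution to one of these roots via a Lewis--Mahler type inequality, and then bound the number of solutions attached to a single root. But the step you flag as ``the main obstacle'' is in fact the entire content of the proposition. Reducing to $\ll s$ roots and applying the gap principle $\mathcal{D}(\bm x_k,\bm x_{k+1})\neq 0$ is exactly what this paper does for Proposition \ref{p2large}, and that route yields only $\ll s(1+\log\log m^{1/n})$, not $\ll s$. To replace $\log\log$ by $O(1)$ one needs the full quantitative Thue--Siegel machinery of \cite{MS88} --- the auxiliary polynomial construction with the parameters $a,b,\lambda,A$ tuned so that the exponent of approximation beyond the threshold $Y_L$ is strictly better than what a Dirichlet/Liouville count can sustain more than an absolutely bounded number of times. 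You name this machinery but do not deploy it; as written, the argument never actually obtains the $O(1)$ bound per root, so the conclusion $\ll s$ does not follow from what is on the page. There is also a minor mismatch of apparatus: you invoke the set $\bm S$ of Lemma \ref{lmMS88} (size $\leq 12s-3$), which this paper uses for small solutions and for Proposition \ref{p2large}, whereas Mueller--Schmidt's treatment of large solutions works with a differently constructed collection of roots tied to their Thue--Siegel setup; the two reductions play different roles and are not interchangeable without checking compatibility with the exponents coming out of the auxiliary polynomial.

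In short: the proposal is an honest roadmap that correctly locates the missing piece, but that piece is the theorem, and without reproducing or at least correctly invoking the Mueller--Schmidt Thue--Siegel lemma with its parameters, the claimed bound $\ll s$ is not established.
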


 \begin{prop}\label{ppmedium} 
Let $F(x,y)\in\mathbb{Z}[x,y]$ be an irreducible binary form with $s+1$ nonzero coefficients and degree $n\geq 3s$. 
  For any positive integer $m$, 
the number of primitive medium solutions of $F(x,y)\leq m$  is 
$$
\ll \left\{
\begin{array}{ll}
 s(1+\frac{\log m^{1/n}}{\log H(F)}) &\quad\mbox{if $n \geq s^4$}\\
 (s\log s)(1+\frac{\log m^{1/n}}{\log H(F)}) &\quad\mbox{if $9s^2 \leq n<s^4$}\\
 (s\log s)(1+\frac{s+\log m^{1/n}}{\log H(F)}) &\quad\mbox{if $ n<9s^2$}.
\end{array}
\right.
$$
\end{prop}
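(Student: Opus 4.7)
The plan is to adapt the gap-principle argument of \cite[section 5]{AB}, combining the Lewis-Mahler inequality with the root reduction supplied by Lemma \ref{lmMS88}. For a primitive medium solution $\bm x=(x,y)$ (assume $y=\xx$ by symmetry), Lemma \ref{Ste} together with $|F(x,y)|\leq m$ produces a root $\alpha_i$ of $F(X,1)$ with
$$
\left|\alpha_i-\frac{x}{y}\right|\ll \frac{M(F)^{n-2}m}{|D(F)|^{1/2}y^n}.
$$
Invoking Lemma \ref{lmMS88} I may replace $\alpha_i$ by some $\alpha_j$ in the distinguished set $\bm S$ of cardinality $\leq 12s-3$, inflating the right-hand side by the factor $R$ from \eqref{R}. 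It therefore suffices to count, for each fixed $\alpha_j\in\bm S$, the primitive medium solutions approximating $\alpha_j$ with this precision and then multiply by $|\bm S|\ll s$.

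For fixed $\alpha_j$, a gap principle (from the elementary inequality $|x_1y_2-x_2y_1|\geq 1$ combined with the approximation above) forces consecutive $y$-values of such solutions to satisfy $y_{k+1}\geq y_k^{\tau}/K$ for an explicit exponent $\tau>1$ and constant $K$ depending on $M(F)$, $m$, and $R$. Since $y$ ranges over the medium interval $Y_S<y\leq Y_L$, the number of such solutions per root is $\ll 1+\log(Y_L/Y_S)/\log(\mathrm{gap})$. Substituting $Y_L$ from \eqref{YL}, $Y_S$ from \eqref{YS}, and using $\log M\gg \log H(F)-O(\log n)$ (a consequence of \eqref{Lan5}), the numerator reduces to $O(n\log H(F)+\log m)$ while the denominator is of order
$$
n\log H(F)-O(\log m)-O(\log^3 n)-\frac{2s\log R}{n-2s}.
$$

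The case analysis then falls out from how these logarithms combine. For $n\geq s^4$, both the $\log^3 n$ and the $R^{2s/(n-2s)}$ loss inherited from $Y_S$ are negligible compared to $n\log H(F)$, yielding the clean bound $s(1+\log m^{1/n}/\log H(F))$. For $9s^2\leq n<s^4$, a coarser passage through Lemma \ref{lmMS88} together with a $\log s$-loss in the per-root count inflates the prefactor to $s\log s$ while leaving the parenthetical factor unchanged. The main obstacle lies in the smallest regime $n<9s^2$: there the $R^{2s}$ factor inside $Y_S$ contributes an irreducible additive term of size $\asymp s/\log H(F)$ to the count per root, and one must verify carefully that this contribution appears only once rather than being compounded by the further $R$-loss incurred in applying Lemma \ref{lmMS88}. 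The delicate bookkeeping of these two $R$-losses is what produces the extra summand $s/\log H(F)$ exactly once, matching the stated bound in the third case.
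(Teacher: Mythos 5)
Your approach deviates from the paper's in a way that is not merely cosmetic but leaves a genuine gap. The paper's proof does not use the Lewis--Mahler inequality (Lemma \ref{Ste}) at all for medium solutions. Instead it invokes Proposition \ref{thesetT} (Mueller--Schmidt's Lemma 17), which produces sets $\bm T$, $\bm T^\ast$ of $\leq 6s+4$ roots together with the approximation
$$
\left|\alpha-\frac{x}{y}\right|<\frac{R(ns)^2}{H(F)^{(1/s)-(1/n)}}\left(\frac{(4e^3s)^n m}{y^{n}}\right)^{1/s}.
$$
The crucial features here are (i) the factor $H(F)^{-(1/s-1/n)}$, which is what makes the eventual bound a function of $\log H(F)$, and (ii) the exponent $n/s$ on $y$ (not $n$), whose smaller size is compensated by the clean constant. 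Your substitute pipeline, Lewis--Mahler $\ll M(F)^{n-2}m/(|D(F)|^{1/2}y^n)$ followed by Lemma \ref{lmMS88}, gives an approximation inequality in which the controlling constant is $M(F)^{n-2}/|D(F)|^{1/2}$. There is no evident way to convert the ensuing gap principle into a bound dominated by $\log H(F)$; your remark that $\log M\gg\log H(F)-O(\log n)$ is the wrong direction (a large $M$ makes your constant worse, not better) and $|D(F)|$ is not bounded below in terms of $H(F)$ in a useful way.

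The second missing ingredient is the subdivision of the medium range $[Y_S,Y_L]$ into $N+1$ subintervals $(Y_\ell,Y_{\ell+1}]$ with $Y_\ell=Y_S\,H(F)^{1/s^{1-(\ell-1)/N}}$. This is what actually produces the three-case dichotomy: one sets $N=2$ when $n\geq s^4$, while otherwise $N$ is chosen (via $3s^{1+1/N}\leq k\leq 3s^{1+1/(N-1)}$ with $k=\sqrt n$ or $k=n$) to satisfy $N\leq\log s/(\log k-\log s)\ll\log s$, and the argument then shows $\ll 1$ solutions in each subinterval $(Y_\ell,Y_{\ell+1}]$ for $\ell<N$, with the last subinterval contributing the extra $\log m^{1/n}/\log H(F)$ (plus $s/\log H(F)$ when $n<9s^2$). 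Your one-shot estimate $\ll 1+\log(Y_L/Y_S)/\log(\mathrm{gap})$ is, besides being misstated (a gap of the form $y_{k+1}\geq y_k^\tau/K$ controls $\log\log$, not $\log$), not refined enough to deliver the $\log s$ prefactor or the precise shape of the $n<9s^2$ case; your treatment of those cases is a narrative of intent rather than an argument, and the claim that ``bookkeeping of the two $R$-losses produces $s/\log H(F)$ exactly once'' is not a step anyone could check. You should replace the Lewis--Mahler route with Proposition \ref{thesetT} and carry out the subinterval decomposition.
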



Proposition \ref{pplarge} is Mueller-Schmidt's result \cite[Prop. 1]{MS88}. Proposition \ref{ppmedium} will be proved in section \ref{smed} 
of this article. 
We generalise to  arbitrary $m$ the argument in \cite[Section 5]{AB}. 
 Small solutions are more difficult to deal with; they will be counted essentially by the proposition \ref{ppsmall} below that we will prove in sections 
 \ref{ssmall} and \ref{ssmall1}.
\\

We write $\tilde P_{S_x}(F,m)$, $\tilde P_{S_y}(F,m)$ for the number of primitive  solutions of \eqref{P} with $0\leq x\leq Y_S$ and 
$0\leq y\leq Y_S$ respectively.

\begin{prop}\label{ppsmall} 
Let $F(x,y)\in\mathbb{Z}[x,y]$ be an irreducible binary form that lies in $C(4s-2)$. Let $M$ be the smallest Mahler measure
among the forms equivalent to $F$, and  $m$ be a positive integer such that
\begin{equation}\label{m2}
 m\leq \dfrac{M}{100^n}.
\end{equation} 
Then 
\begin{equation}\label{PxPy}
 \max(\tilde P_{S_x}(F,m),\tilde P_{S_y}(F,m)) \ll s+ \log^3 n + \log m^{1/n}.
 \end{equation}
\end{prop}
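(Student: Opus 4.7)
The plan is to invoke the small-solutions framework of Section \ref{ssmall}, specialised to the choice $Y = Y_S$ from \eqref{YS}. Our hypotheses ($F \in C(4s-2)$ and $m \leq M/100^n$) match exactly those required in that section, so the key inequality \eqref{takinglog} applies and yields
$$\tilde P_{S_y}(F,m)\ \ll\ |\mathfrak{X} \cup \mathfrak{X}_0| + s\ \ll\ \frac{n \log Y + n \log(6R+5)}{\log(M/(6^n m))} + s,$$
where the additive $s$ accounts for $(x^\ast,y^\ast)$, for the $t\leq 12s-3$ exceptional solutions $(x^{(i)},y^{(i)})$ supplied by Lemma \ref{lmMS88}, and for the at-most-one solution with $y=0$.

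I would then substitute $Y=Y_S = ((e^6 s)^n R^{2s} m)^{1/(n-2s)}$, using the fact that the ambient hypothesis of Theorem \ref{th1} forces $n \geq 3s$, so $n/(n-2s) \leq 3$. Coupled with the lower bound $\log(M/(6^n m)) \gg n$ coming from \eqref{m2}, this produces three contributions to $n\log Y_S / \log(M/(6^n m))$: a $\log s$-term absorbed into $s$; a term of size $\ll (s/n)\log R \ll \log R$, which is $\ll \log^3 n$ by \eqref{R} and $s/n \leq 1/3$; and a term $\ll \log m^{1/n}$. The $n\log(6R+5)$ piece of the numerator similarly contributes $\ll \log R \ll \log^3 n$. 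Summing these and adding the initial $s$ gives the advertised bound $\ll s + \log^3 n + \log m^{1/n}$.

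Finally, $\tilde P_{S_x}(F,m)$ is estimated by exactly the same argument with the roles of $x$ and $y$ reversed: one uses the factorisation $F(x,y) = a_0\prod_{i=1}^n(y-\gamma_i x)$ and the linear forms $L_i(x,y) = y - \gamma_i x$ in place of $x - \alpha_i y$, as indicated at the end of Section \ref{ssmall}. I do not anticipate any genuine obstacle here beyond the bookkeeping above: all the substantive work --- in particular the combination of the Bombieri--Schmidt linear-form method with the $C(4s-2)$-specific Lemma \ref{lmMS88} that replaces an $n$-dependence by an $s$-dependence in the exceptional set $\bm S$, and the verification that $|\mathfrak{X}\cup\mathfrak{X}_0|$ is governed by \eqref{takinglog} --- has already been carried out in Section \ref{ssmall}. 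The only point worth double-checking during the write-up is that the choice $Y=Y_S$ interacts cleanly with $n\geq 3s$ so that none of the three pieces of $\log Y_S$ blow up, which is precisely why the Mueller--Schmidt definition \eqref{YS} is tailored to the exponent $n-2s$.
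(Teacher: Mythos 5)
Your proposal is correct and follows the same route as the paper: apply \eqref{takinglog} with $Y=Y_S$, expand $\log Y_S$ using $n/(n-2s)\leq 3$, divide by $\log(M/(6^n m))\gg n$, and add the $\ll s$ exceptional solutions. The only remark worth recording is that the exponent bound $n/(n-2s)\leq 3$ requires $n\geq 3s$, a hypothesis not stated in Proposition \ref{ppsmall} itself but inherited from the ambient setting of Theorem \ref{th1} (and implicitly used in the paper's own proof as well); you flag this explicitly, which is a small improvement in exposition.
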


Next we prove Theorem \ref{th1} assuming Propositions \ref{ppmedium} and \ref{ppsmall}.
Large and medium solutions are counted by Propositions \ref{pplarge} and \ref{ppmedium} respectively, so we only need to count small solutions assuming 
Proposition \ref{ppsmall}. For this we  refine the  argument that we  used for large discriminant in section \ref{sP2}. 
Note that the argument below can be used together with Proposition \ref{ppsmall} to bound small solutions because the condition on $F$ 
in Proposition \ref{ppsmall} is
 $F\in C(4s-2)$; it would not be useful if we had a condition on the number of nonzero coefficients such as in Proposition \ref{ppmedium}.

Let $F\in\mathbb{Z}[x,y]$ be an irreducible binary form with $s+1$ nonzero coefficients and degree $n\geq 3s$. By Lemma \ref{Ct}, $F(x,y)\in C(4s-2)$.
Let $m$ be a positive integer.
Pick  the smallest prime $p$ that satisfies 
\begin{equation}\label{yup}
p>10^6 m^\frac{2}{n}|D(F)|^{-\frac{1}{n(n-1)}}
\end{equation}
and consider the matrices
$$
A_0=\begin{pmatrix} 1 &0\\0 &p\end{pmatrix},\qquad
A_j=\begin{pmatrix} p &j\\0 &1\end{pmatrix}\qquad (j=1,\ldots,p).
$$
Recall that
$F_{A_j}\in C(4s-2)$ for $j=0,\ldots,p$ and  $\mathbb{Z}^2=\cup_{j=0}^p A_j\mathbb{Z}^2$, so that
any solution 
$(x,y)$ of \eqref{P} gives a solution $(u,v)=A_j^{-1}(x,y)$ of $2^{-n}m\leq F_{A_j}(u,v)< m$ for some $j=0,\ldots, p$. 
Moreover, when $j\geq 1$ we have $v=y$,
so if $(x,y)$ satisfies $0\leq y\leq Y_S$, then also $(u,v)$ satisfies $0\leq v\leq Y_S$.
Hence
$$
\tilde P_{S_y}(F,m)\leq \tilde P(F_{A_0},m) + \sum_{j=1}^p \tilde P_{S_y}(F_{A_j},m).
$$

For $j=0,\ldots,p$, by \eqref{St6} and \eqref{yup}, 
$$
|D(F_{A_j})|=p^{n(n-1)}|D(F)|> 10^{6n(n-1)} m^{2n-2}.
$$
Then, by \eqref{mahD5},
\begin{equation}\label{Dp}
10^{6n(n-1)} m^{2n-2}\leq |D(F_{A_j})| \leq n^n M_j^{2n-2},
\end{equation}
where $M_j$ is the smallest Mahler measure among the forms equivalent to $F_{A_j}$.
Hence, $m\leq M_j/100^n$, and we can apply Proposition \ref{ppsmall} to $F_{A_j}$. We obtain that 
$$
\tilde P_{S_y}(F_{A_j},m)\ll s + \log^3n + \log m^{1/n}.
$$
Therefore,
\begin{align*}
\tilde P_{S_y}(F,m) &\ll \tilde P(F_{A_0},m) + p(s + \log^3n + \log m^{1/n})\\
&\ll \tilde P(F_{A_0},m)+ m^{2/n}|D(F)|^{-\frac{1}{n(n-1)}}(s+\log^3n + \log m^{1/n}).
\end{align*}
If we consider 
$$
A'_0=\begin{pmatrix} p &0\\0 &1\end{pmatrix},\qquad
A'_j=\begin{pmatrix} 1 &0\\j &p\end{pmatrix}\qquad (j=1,\ldots,p),
$$
any solution of \eqref{P} gives a solution $(u,v)$ of $2^{-n} m\leq F_{A_j'}(u,v)< m$ for some $j=0,\ldots,p$. Similarly as above, 
 when $j\geq 1$, $u=x$ and we conclude that
$$
\tilde P_{S_x}(F,m)\ll  \tilde P(F'_{A_0},m)+ m^{2/n}|D(F)|^{-\frac{1}{n(n-1)}}(s+\log^3n + \log m^{1/n}).
$$
The forms $F_{A_0}$ and $F'_{A_0}$ have $s+1$ nonzero coefficients, 
so  Propositions \ref{ppmedium} and \ref{pplarge} apply together with Proposition \ref{ppsmall}. We obtain that  
$$
\max(\tilde P(F'_{A_0},m), \tilde P(F_{A_0},m))\ll c(s)(1+\log m^{1/n}) + \log^3 n,
$$
where 
$c(s)$ is defined in \eqref{c}.

Finally, the number of primitive small solutions of \eqref{P} is 
$$
\ll  (c(s) (1+\log m^{1/n})+\log^3 n) m^{2/n}|D(F)|^{-\frac{1}{n(n-1)}}.
$$

\section{Proof of Proposition \ref{ppsmall}}\label{ssmall1}

We apply the inequality \eqref{takinglog} to $Y=Y_S$, defined by \eqref{YS}.
Since $\frac{n}{n-2s}\leq 3$,
$$
n\log Y_S + n\log(6R+5)\ll \log m +n\log s + n\log^3 n.
$$
We also have, by \eqref{m2}, 
$$
\log (M/(6^nm)) \gg n.
$$
We conclude that
$$
|\mathfrak{X}\cup\mathfrak{X}_0|\ll \dfrac{\log m}{n} +\log s + \log^3 n.
$$
Hence 
$$
\max(\tilde P_{S_x}(F,m), \tilde P_{S_y}(F,m)) \ll  s + \log^3 n + \log m^{1/n}.
$$

\section{Medium solutions, proof of Proposition \ref{ppmedium}}\label{smed}

Let $F(x,y)\in\Z[x,y]$ be a binary form of degree $n$ with $s$ non-zero coefficients. Let $m$ be a positive integer.

We divide the interval $ [Y_{S}, Y_{L}]$ into $N+1$ subintervals, where $Y_{S}$ and $Y_{L}$ are defined in \eqref{YS} and \eqref{YL} and 
$N$ depends on $s$ and is defined below. We will show that there are only few solutions $(x , y)$ with $\x$   in each of these subintervals. 
In this section we will assume $n \geq 3s$.
We define  the positive integer $N = N(n, s)$ as follows.\\
If $n \geq s^4$, we put $N=2$. Otherwise, we put $k=\sqrt{n}$ if $9s^2\leq n < s^4$ and $k=n$ if $n<9s^2$, and
 choose $N \in \mathbb{N}$ such that
\begin{equation}\label{N}
 3s^{1+\frac{1}{N}}\leq k\leq 3s^{1+\frac{1}{N-1}}.
\end{equation}
The inequality \eqref{N} leads to
\begin{equation}\label{bN}
N\leq \dfrac{\log s}{\log k - \log s}.
\end{equation}

For $\ell=1,\ldots,N$, we define 
$$
Y_\ell=Y_S H(F)^{\frac{1}{s^{1-(\ell-1)/N}}}.
$$
We put 
$$Y_0=Y_S\, \qquad  \textrm{and} \, \qquad Y_{N+1}=Y_L.
$$

\begin{prop}\label{thesetT}
 There is a set $\bm T$ of roots of $F(x,1)$ and a set $\bm T^\ast$ of roots of $F(1,y)$, both with cardinalities $\leq 6s+4$, 
such that any solution $(x,y)$ of $|F(x,y)|\leq m$ with $\x\geq Y_S$ either has 
\begin{equation}\label{17.1}
\left|\alpha-\dfrac{x}{y}\right|<\dfrac{R(ns)^2}{H(F)^{(1/s)-(1/n)}}\left(\dfrac{(4e^3s)^n m}{y^{n}}\right)^{1/s}
\end{equation}
with some $\alpha\in \bm T$ and $R$ defined by \eqref{R}, or has
\begin{equation}\label{17.2}
\left|\alpha^\ast-\dfrac{y}{x}\right|<\dfrac{R(ns)^2}{H(F)^{(1/s)-(1/n)}}\left(\dfrac{(4e^3s)^n m}{x^{n}}\right)^{1/s}
\end{equation}
for some $\alpha^\ast\in \bm T^\ast$.
\end{prop}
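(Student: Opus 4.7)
The plan is to prove Proposition \ref{thesetT} in three stages: build the sets $\bm T$ and $\bm T^\ast$ by a direct adaptation of the proof of Lemma \ref{lmMS88}, reduce the inequality $|F(x,y)|\leq m$ to a Diophantine approximation for $\zeta:=x/y$ (or $y/x$), and then bound the closest root of $F(z,1)$ to $\zeta$ via a fewnomial estimate.

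For the construction of $\bm T$, I would apply the strategy of Lemma \ref{lmMS88} to the polynomial $f(z)=F(z,1)$, but replace the bound on real zeros coming from $F\in C(4s-2)$ by Descartes' rule of signs, which is sharper here. Since $f$ has $s+1$ nonzero coefficients, $f$ has at most $2s$ real zeros; and $f'$ has at most $s$ nonzero coefficients, hence at most $2s-2$ nonzero real zeros. Thus $ff'$ has at most $4s-1$ real zeros, splitting the real line into at most $4s$ intervals (or half-lines) on which $ff'$ does not vanish. Applying Lemma \ref{lemma 6} on each such interval and adjoining the real roots of $f$ yields a set $\bm T$ of roots of $F(z,1)$ with $|\bm T|\leq 6s\leq 6s+4$ that satisfies the conclusion of Lemma \ref{lmMS88}; the set $\bm T^\ast$ is obtained by the same construction applied to $F(1,z)$, which is likewise a fewnomial with $s+1$ terms.

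Next I would consider a solution $(x,y)$ with $\x\geq Y_S$. The two cases $|y|\geq|x|$ and $|x|>|y|$ are exchanged by switching $F(z,1)\leftrightarrow F(1,z)$ and $\bm T\leftrightarrow\bm T^\ast$, so I treat only the first; then $|y|=\xx\geq\x=|x|\geq Y_S$. Setting $\zeta=x/y\in[-1,1]$, the inequality $|F(x,y)|\leq m$ becomes $|F(\zeta,1)|\leq m/|y|^n$. Let $\alpha_j$ be a root of $F(z,1)$ with $|\zeta-\alpha_j|=\min_i|\zeta-\alpha_i|$. By the refined version of Lemma \ref{lmMS88} built in the previous step, there is $\alpha\in\bm T$ with $|\zeta-\alpha|\leq R|\zeta-\alpha_j|$, so \eqref{17.1} reduces to the fewnomial estimate
\[
|\zeta-\alpha_j|^s \;\leq\; (ns)^{2s}(4e^3s)^n\,H(F)^{s/n-1}\,|F(\zeta,1)|.
\]

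This fewnomial estimate is the main obstacle. To obtain it I would exploit the archimedean Newton polygon of $f=F(z,1)$: because $f$ has only $s+1$ nonzero coefficients, its Newton polygon has at most $s$ edges, so the $n$ roots $\alpha_i$ partition into at most $s$ clusters in which the absolute values are mutually comparable up to a factor of order $4e^3s$, while across different clusters the magnitudes are separated by ratios of the coefficients of $F$, hence controlled by $H(F)$. Since $|\zeta|\leq 1$, each factor $|\zeta-\alpha_i|$ with $\alpha_i$ outside the cluster containing $\alpha_j$ is bounded below by a comparable power of the corresponding cluster magnitude; aggregating these lower bounds over the at most $s-1$ other clusters, and weighting by the edge multiplicities (which contribute a factor of shape $(ns)^{2s}$), yields a lower bound on $\prod_{i\neq j}|\zeta-\alpha_i|$ proportional to $H(F)^{1-s/n}/(|a_n|(4e^3s)^n)$. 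Dividing $|F(\zeta,1)|=|a_n|\,|\zeta-\alpha_j|\prod_{i\neq j}|\zeta-\alpha_i|$ by this lower bound and using $|a_n|\leq H(F)$ to absorb the leading coefficient isolates $|\zeta-\alpha_j|$ with the required constants. The delicate step is making the Newton-polygon clustering quantitative enough to produce exactly the constants $(ns)^{2s}$ and $(4e^3s)^n$, and the hypothesis $\x\geq Y_S$ enters precisely here, to guarantee that $\zeta$ lies well within one cluster; once this is done, the case $|x|>|y|$ is handled symmetrically and yields \eqref{17.2}, completing the proof.
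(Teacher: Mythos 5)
The paper's own ``proof'' of Proposition~\ref{thesetT} is a one-line citation: it \emph{is} Lemma~17 of Mueller--Schmidt \cite{MS88}. So you are not being asked to compare against an argument the paper spells out; rather, your proposal is a reconstruction of a fairly deep result that in \cite{MS88} rests on an entire chain of preliminary lemmas (their Lemmas~8--16) developing the archimedean Newton-polygon theory of fewnomials. Judged on its own terms, your reconstruction has a genuine gap at exactly the point you flag as ``the delicate step.''

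The front end of your argument is fine. Building $\bm T$ by Descartes' rule (instead of the $C(4s-2)$ count) plus Lemma~\ref{lemma 6} gives a set of at most $6s$ roots with $\min_{\alpha\in\bm T}|\zeta-\alpha|\le R\min_i|\zeta-\alpha_i|$, well within the required $6s+4$, and the symmetry reduction $|x|\le|y|$ versus $|x|>|y|$ via $F(z,1)\leftrightarrow F(1,z)$ is correct. The reduction of \eqref{17.1} to a fewnomial estimate of the shape $|\zeta-\alpha_j|\le C\,|F(\zeta,1)|^{1/s}$ is also a valid decomposition.

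The problem is in the last paragraph, which is both unproved and, as written, internally inconsistent. You claim to derive the estimate by producing a lower bound on $\prod_{i\neq j}|\zeta-\alpha_i|$ that is \emph{independent of} $|\zeta-\alpha_j|$ (``yields a lower bound on $\prod_{i\neq j}|\zeta-\alpha_i|$ proportional to $H(F)^{1-s/n}/(|a_n|(4e^3s)^n)$''). Plugging such a bound into $|F(\zeta,1)|=|a_n||\zeta-\alpha_j|\prod_{i\neq j}|\zeta-\alpha_i|$ gives the \emph{linear} inequality $|\zeta-\alpha_j|\ll |F(\zeta,1)|\ll m/|y|^n$, not the stated $s$-th-root bound $|\zeta-\alpha_j|\ll (m/|y|^n)^{1/s}$. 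A linear bound would be a dramatic strengthening of Mueller--Schmidt (it would make the gap argument in Proposition~\ref{ppmedium} essentially trivial), and it is false in general: the magnitude cluster containing $\alpha_j$ can contain many roots, each as close to $\zeta$ as $\alpha_j$ is, so $\prod_{i\neq j}|\zeta-\alpha_i|$ can be as small as $|\zeta-\alpha_j|^{k-1}$ for a cluster of size $k$. The exponent $1/s$ in \eqref{17.1}--\eqref{17.2} is precisely the price paid for these nearby roots, and controlling $k$ (and the resulting constants $(ns)^{2s}$ and $(4e^3s)^n$) via the Newton polygon and the hypothesis $\langle\bm x\rangle\ge Y_S$ is the real content of the lemma. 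Your sketch asserts this can be done but does not do it, so the proposal does not constitute a proof; it is a plausible outline of the strategy in \cite{MS88} with the central technical step missing and its heuristic stated in a form that would, if taken literally, prove too much.
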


\begin{proof}
This is Lemma 17 of \cite{MS88}.  
\end{proof}

Let $\alpha\in \bm T$. For $\ell\in\left\{0,\ldots,N\right\}$, let
$(x_1, y_1),\ldots, (x_{w_{\ell}}, y_{w_{\ell}})$ be the  primitive solutions of $|F(x,y)|\leq m$, 
with  $Y_{\ell} < y_i\leq Y_{\ell+1}$, satisfying  \eqref{17.1} and ordered so that
$$
 Y_{\ell}< y_{1}\leq
\ldots\leq y_{w_{\ell}} \leq Y_{\ell+1}.
$$
By \eqref{17.1},
we have that
$$
\dfrac{1}{y_i y_{i+1}} \leq \left|\dfrac{x_{i+1}}{y_{i+1}}-\dfrac{x_i}{y_i}\right|\leq \dfrac{U}{ H(F)^{\frac{1}{s}-\frac{1}{n}} y_i^{\frac{n}{s}}},
$$
with 
$$
U=2R(ns)^2 (4e^3s)^{n/s} m^{1/s}.
$$
Therefore, for solutions $(x, y)$ with $y \in (Y_{\ell}, Y_{\ell+1}]$, we have
\begin{equation}\label{yi1}
y_{i+1}\geq U^ {-1} H(F)^{\frac{1}{s}-\frac{1}{n}} y_i^{\frac{n}{s}-1}\geq U^{-1} H(F)^{\frac{1}{s}-\frac{1}{n}} Y_\ell^{\frac{n}{s}-2}y_i.
\end{equation}

First we will give an estimate for the number of primitive solutions in $(Y_{0}, Y_{1}]$. By the definition 
 of $Y_S=Y_0$ and since $n\geq 3s$, we have
\begin{equation}\label{yi2}
U^{-1}Y_S^{\frac{n}{s}-2} \geq \dfrac{e^{\frac{3n}{s}}R}{2(ns)^2 4^{\frac{n}{s}}} \geq 1.
\end{equation}

For $\ell=0$, we have by \eqref{yi1} and \eqref{yi2} that $y_{i+1}\geq H(F)^{\frac{1}{s}-\frac{1}{n}} y_i$, so 
$y_{w_0}\geq (H(F)^{\frac{1}{s}-\frac{1}{n}})^{(w_0-1)} y_1$.  So we have
$$
Y_{1} \geq y_{w_0}\geq (H(F)^{\frac{1}{s}-\frac{1}{n}})^{(w_0-1)} Y_{0},
$$
and
$$
w_0-1\leq \dfrac{\log \frac{Y_1}{Y_0}}{(\frac{1}{s}-\frac{1}{n})\log H(F)} < \dfrac{1}{1-\frac{s}{n}}\leq \dfrac{3}{2},
$$
since 
$\log \frac{Y_1}{Y_0} = \frac{1}{s}\log H(F)$ and $n\geq 3s$.

For $1\leq \ell < N$, by \eqref{yi1} and \eqref{yi2} we have that
$$
 y_{i+1}\geq U^{-1} H(F)^{\frac{1}{s}-\frac{1}{n}} Y_S^{\frac{n}{s}-2} H(F)^{\frac{n/s-2}{s^{1-(\ell-1)/N}}} y_i
 \geq H(F)^{\frac{n}{s^{2-(\ell-1)/N}}-\frac{2}{s^{1-(\ell-1)/N}}+\frac{1}{s}-\frac{1}{n}} y_i.
 $$
Therefore,
$$
y_{w_{\ell}}\geq  H(F)^{\left(\frac{n}{s^{2-(\ell-1)/N}}-\frac{2}{s^{1-(\ell-1)/N}}+\frac{1}{s}-\frac{1}{n}\right)(w_{\ell}-1)}y_1,
$$
and since $Y_{\ell} < y_{1} \leq y_{w_{\ell}} \leq Y_{\ell+1}$, we have
$$
w_{\ell}-1\leq \dfrac{\log \frac{Y_{\ell+1}}{Y_\ell}}{(\frac{n}{s^{2-(\ell-1)/N}}-\frac{2}{s^{1-(\ell-1)/N}}+\frac{1}{s}-\frac{1}{n})\log H(F)}.
$$
For $\ell < N$, since $\log \frac{Y_{\ell+1}}{Y_\ell} < \frac{1}{s^{1-\ell/N}}\log H(F)$ and $n\geq 3s^{1+1/N}$,
$$
w_{\ell}-1\leq
\dfrac{1}{\frac{n}{s^{1+1/N}}-\frac{2}{s^{1/N}}+\frac{1}{s^{\ell/N}}-\frac{s^{1-\ell/N}}{n}} \leq 1.
$$
For $\ell=N$, we have  
$$
\log Y_{\ell+1}=\log Y_L  \ll \log H(F) +  \log m^{1/n} + \sqrt{n},
$$ 
so
$$
w_{N}-1\ll \dfrac{\log H(F) +  \log m^{1/n} + \sqrt{n}}{(\frac{n}{s^{1+1/N}}-\frac{2}{s^{1/N}}+\frac{1}{s}-\frac{1}{n})\log H(F)}.
$$
If $n<9s^2$, then $\sqrt{n}<3s$ and 
$$
w_{N}-1\ll 1+\dfrac{s+\log m^{1/n}}{\log H(F)}.
$$
If $n\geq 9s^2$, then  by \eqref{N},
$$
\dfrac{\sqrt{n}}{\frac{n}{s^{1+1/N}}-\frac{2}{s^{1/N}}+\frac{1}{s}-\frac{1}{n}}\ll 1,
$$ and
$$
w_N-1 \ll 1+\dfrac{\log m^{1/n}}{\log H(F)}.
$$

We conclude that the number of primitive medium solutions of \eqref{17.1}  for each $\alpha\in \bm T$ is 
$\ll N+ \frac{\log m^{1/n}}{\log H(F)}$ when
$n\geq 9 s^2$ and $\ll N+\frac{s+\log m^{1/n}}{\log H(F)}$ when
$n< 9 s^2$. 
In a similar way, we obtain the same bound for the number of primitive medium solutions of \eqref{17.2}  for each $\alpha^\ast\in \bm T^\ast$. 
Using Proposition \ref{thesetT} and the fact that $N=2$ for $n\geq s^4$ and $N\leq \log s$ for $n<s^4$, we obtain Proposition  \ref{ppmedium}.



\end{document}